\documentclass[12pt]{amsart}

\usepackage{hyperref,graphicx}
\usepackage{xcolor}
\usepackage{url}
\usepackage{wrapfig}
\usepackage{subcaption,cleveref}

\usepackage{float}

\usepackage{amsmath}
\usepackage{amssymb}
\usepackage{amsthm}

\usepackage{indentfirst}
\newtheorem{thm}{Theorem}[section]

\newtheorem{cor}[thm]{Corollary}
\newtheorem{lem}[thm]{Lemma}

\title[PERIODIC GRAPHS WITH REDUCIBLE DISPERSION POLYNOMIALS]{PERIODIC GRAPH OPERATORS WITH REDUCIBLE DISPERSION POLYNOMIALS FOR ALL POTENTIALS}
\author[Diantong Li]{Diantong Li}
\address{Institute of Applied Mathematics, Academy of Mathematics and Systems Science, Chinese Academy of Sciences, Beijing 100190, P.R.China}
\email{lidiantong25@mails.ucas.ac.cn}
\keywords{Periodic Graph Operator, Dispersion Polynomial, Bloch variety}
\begin{document}

\begin{abstract}
  We give a complete characterization of periodic graph operators whose dispersion polynomials are reducible for every potential. Consequently, the reducibility dichotomy for parameter-dependent Laurent polynomials implies that, for every other periodic graph operator, the dispersion polynomial, and hence the Bloch variety, is irreducible for generic potentials.
  
  Our proof proceeds through two reductions. We first use the monodromy of the roots of the dispersion polynomial with respect to the potential parameters to reduce the problem to the splitting case. We then show that the splitting property passes to the principal submatrices of the Floquet matrix, reducing the problem further to the case that the fundamental domain contains two vertices.
\end{abstract}
\maketitle
\section{Introduction}
Bloch varieties are central objects in the spectral theory of periodic operators, and their irreducibility is closely related to spectral and inverse spectral problems; see \cite{Kuc16,SS26} for general background. Early work in the continuous setting includes the compactification approach of Bättig, Knörrer, and Trubowitz \cite{BKT91}. In the discrete setting, Liu \cite{Liu22} established irreducibility results for Bloch and Fermi varieties of discrete periodic Schr\"{o}dinger operators on $\mathbb Z^d$. These ideas were later extended to a broad class of $\mathbb Z^d$-periodic graphs; see \cite{FLM22,FLG25} for related results on the irreducibility of Bloch varieties for periodic graph operators. More recently, Faust and Liu \cite{faust2026generic} developed a different approach to study the generic irreducibility of Bloch varieties for general periodic graph operators.

This paper is motivated by the problems considered in \cite{faust2026generic} and \cite{faust2025absence}. In \cite{faust2026generic}, both edge weights and potential are treated as variables, and the authors prove that, under certain connectivity assumptions, the dispersion polynomial and the Bloch variety are generically irreducible. Their approach relies on a strong dichotomy that reduces the problem to a class of graphs with a simple structure.

For a periodic graph with fixed nonzero edge weights, one still has a dichotomy with respect to the potential: either the dispersion polynomial is reducible for every potential, or it is irreducible for generic potentials. However, for a given graph, this dichotomy does not provide any information about which of these two alternatives occurs. In \cite{faust2025absence}, the authors prove the absence of flat bands for generic potentials using a perturbation argument. This naturally leads to the question of whether the generic irreducibility result of \cite{faust2026generic} continues to hold when the edge weights are fixed and only the potential is allowed to vary. The answer is negative in general, as in \cite{faust2026generic} the authors construct a $\mathbb Z$-periodic graph with two vertices in a fundamental domain such that the dispersion polynomial is reducible for every potential. Surprisingly, our results show that every such counterexample must have essentially the same graph structure. In this paper, we give a complete characterization of the graph structure and edge weights for which the dispersion polynomial is reducible for every potential.

Let $\mathcal{L}_V$ be a $\mathbb Z^d$-periodic graph operator with periodic potential $V$. In this paper, the potentials are considered as variables. Consequently, after applying the Floquet transform, we obtain a finite-dimensional matrix family depending simultaneously on the Floquet variables and the potentials, which we denote by $L(z,V)$. 
Here
\[
z=(z_1,\ldots,z_d)\in(\mathbb C^\ast)^d
\]
is the complexified Floquet variable associated with the quasimomentum $k=(k_1,\ldots,k_d)$ through
\[
z_j=e^{2\pi i k_j},\qquad j=1,\ldots,d.
\]
The entries of $L(z,V)$ are Laurent polynomials in $z$, while the dependence on $V$ appears only on the diagonal. The spectral problem for the infinite periodic operator is encoded by a finite-dimensional matrix whose entries depend algebraically on both the Floquet variables and the potential parameters.

Accordingly, we introduce the dispersion polynomial
\[
D(z,V,\lambda)
=
\det(\lambda I - L(z,V)).
\]
When the potential $V$ is fixed, the corresponding Bloch variety is
\[
\mathcal B_V
=
\left\{
(z,\lambda)\in(\mathbb C^\ast)^d\times\mathbb C:
D(z,V,\lambda)=0
\right\}.
\]

For a $\mathbb{Z}^d$-periodic graph $\Gamma$, we choose a fundamental domain $W = \{u_1,...,u_N\}$. Define a finite graph $S(\Gamma/\mathbb{Z}^d)$ with vertex set $W$ and edge set
\[\mathcal{E}(S(\Gamma/\mathbb{Z}^d)) = \{ (u_i,u_j)\,|\,(u_i,u_j+a) \in \mathcal{E}(\Gamma)\, \mbox{for some} \, a \in \mathbb{Z}^d, i\neq j \}.\] 
We call $S(\Gamma/\mathbb{Z}^d)$ the quotient graph of $\Gamma$. For any $i,j \in \{1,...,N\}$, we define the set 
\[C_{ij}= \{a \in \mathbb{Z}^d \, | \,(u_i,u_j + a) \in \mathcal{E} \}.\]
For convenience, we can assume that the graph has no self-loop (the reason will be explained later), i.e., $0 \notin C_{ii}$. 
If the quotient graph is disconnected, then $L(z,V)$ can be written in block diagonal form, so the dispersion polynomial is always reducible. When $W$ only contains one vertex, $D(z,V,\lambda)$ is linear in $\lambda$, thus it is always irreducible. Therefore, in the following, we always assume that the quotient graph is connected and $|W| \geq 2$.

Now we are ready to state our main result:
\begin{thm}\label{main1}
$D(z,V,\lambda)$ is reducible for every $V \in \mathbb{C}^N$ if and only if the following conditions hold:
\begin{enumerate}
    \item[] $\mathrm{I}$. There exists a finite set $C \subset \mathbb{Z}^d$ such that $C_{ii} = C$ for every $i = 1,...,N$. Moreover, for any fixed $a \in C$, the edge weight of $(u_i,u_i+a)$ is independent of $i$. 
    \item[] $\mathrm{II}$. For every distinct $i,j \in \{1,...,N\}$, $|C_{ij}| \leq 1$.
    \item[] $\mathrm{III}$. Under condition $\mathrm{II}$, for any cycle in the quotient graph:
\[\mathcal{P} : u_{i_1} \rightarrow u_{i_2}\rightarrow \cdots \rightarrow u_{i_m} \rightarrow u_{i_1},\]
we have 
\begin{align}\label{quasi}
   \displaystyle\sum_{j = 1}^{m}\alpha_{i_ji_{j+1}} = 0, 
\end{align}
where $\alpha_{i_ji_{j+1}}$ is the unique element in $C_{i_ji_{j+1}}$, and the indices are taken modulo $m$.
\end{enumerate}
If any of the above conditions fails, $D(z,V,\lambda)$ is irreducible for generic $V \in \mathbb{C}^N$.
\end{thm}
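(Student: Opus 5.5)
Sufficiency is a direct change of basis; necessity goes through the two reductions announced in the abstract, monodromy and then principal submatrices, ending at the case $N=2$. The final sentence of the theorem comes from the reducibility dichotomy for parameter-dependent Laurent polynomials, once we know that failure of I--III means reducibility is not universal.

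For sufficiency, assume I--III. Since the quotient graph is connected, condition III (the cycle sums of the $\alpha_{ij}$ vanish) gives a potential function. That is, there exist $\beta_1,\dots,\beta_N\in\mathbb Z^d$ with $\alpha_{ij}=\beta_j-\beta_i$ for every edge of the quotient graph: fix $\beta_1=0$ and propagate along a spanning tree; III makes the propagation consistent on the remaining edges. Conjugate $L(z,V)$ by the diagonal matrix $\mathrm{diag}(z^{\beta_i})$. This removes all $z$-dependence from the off-diagonal entries, which by II are single monomials. By I, the diagonal becomes $V+p(z)I$ with $p(z)=\sum_{a\in C}w_a z^a$ independent of $i$. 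Hence $L(z,V)$ is similar to $p(z)I+M(V)$ with $M(V)$ a constant matrix, and
\[
D(z,V,\lambda)=\det\big((\lambda-p(z))I-M(V)\big)=\prod_{k=1}^{N}\big(\lambda-p(z)-\mu_k(V)\big),
\]
which is reducible since $N\ge2$.

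For necessity, suppose $D$ is reducible for every $V$, and view $D$ as a polynomial over $\mathbb C(V)$. The first step is monodromy in $V$. Along a general line $V=tV_0+V_1$ with $t\to\infty$, the eigenvalues of $L$ are $tV_{0,i}+O(1)$, and these are distinct. Monodromy in the potential parameters acts on the roots of $D$ in $\lambda$ over $\mathbb C(z)$ or its algebraic closure, and it acts transitively. Using this, I would argue that a factorization valid for all $V$ forces the factors to be conjugate under this monodromy, and finally forces a complete splitting: $D=\prod_k(\lambda-f_k(z,V))$ with each $f_k$ affine in $V$ and, from the asymptotics, $f_k=V_k+g_k(z)$ after relabeling. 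This is the \emph{splitting case}. This step is the main obstacle, because one must exclude factorizations into several nonlinear factors of intermediate degree. My first attempt is to show that the Galois group of $D$ over $\mathbb C(z,V)$ is generated by transpositions coming from eigenvalue collisions in $V$ at fixed generic $z$. The Galois group is transitive on each factor's roots, while a collision of $\lambda_i$ and $\lambda_j$ that belong to different factors is incompatible with $V$-uniform reducibility. Together these force every factor to be linear.

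Second, the splitting property passes to principal submatrices. Setting $V_j\to\infty$ for $j\notin S$ and dividing out those roots shows that the principal submatrix $L_S$ (with the potential restricted to $S$) again has a completely split characteristic polynomial, with roots $V_k+g_k(z)$ for $k\in S$. Specializing to $S=\{i\}$ gives $g_i(z)=L_{ii}(z)$. For $S=\{i,j\}$ we get
\[
\det\begin{pmatrix}\lambda-V_i-L_{ii}&-L_{ij}\\ -L_{ji}&\lambda-V_j-L_{jj}\end{pmatrix}=(\lambda-V_i-L_{ii})(\lambda-V_j-L_{jj})
\]
for all $V$, which yields $L_{ij}L_{ji}=0$. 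That cannot hold when $u_i,u_j$ are adjacent, so I have the normalization off somewhat: the $g_k$ must be allowed to absorb a common shift. The correct $N=2$ analysis should yield $|C_{ij}|\le1$ and $C_{ii}=C_{jj}$ with equal weights. Concretely, the discriminant $(V_i-V_j+L_{ii}-L_{jj})^2+4L_{ij}L_{ji}$ must be a square in $\mathbb C[z^{\pm1}][V]$, which forces $L_{ii}-L_{jj}$ to be constant and $L_{ij}L_{ji}$ to be a monomial times a constant. This gives II for adjacent pairs and I, by connectivity. Finally, for III, take the principal submatrix on a cycle (or a minimal cycle), conjugate edges along a path, and compute the $z$-dependent term coming from the full cycle in the determinant. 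Splitting for all $V$ then requires the cycle monomial $z^{\sum\alpha}$ to be constant, which gives \eqref{quasi}.
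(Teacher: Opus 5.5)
Your sufficiency argument and your overall plan for necessity (monodromy reduction to splitting, passage to principal submatrices, the case $N=2$, then cycles) match the paper. The first step of necessity, however, cannot work as you set it up.

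You pass from ``reducible for every $V$'' to a factorization that is uniform in $V$: roots $f_k$ affine in $V$, factors over $\mathbb{C}(V)$, a discriminant that is a square in $\mathbb{C}[z^{\pm1}][V]$. No such factorization exists, even in the situation the theorem characterizes. Under I--III, $D=\prod_k(\lambda-\phi(z)-\mu_k(V))$, where $\mu_k(V)$ are the eigenvalues of $\mathrm{diag}(V)+H$; these are algebraic but not rational in $V$. In fact, whenever the quotient graph is connected, $D$ is irreducible over $\mathbb{C}(z,V)$. A monic factorization would have coefficients in $\mathbb{C}[z^{\pm1},V]$, and specializing at a generic $z^0$ would contradict Lemma~\ref{irre}. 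So the Galois group of $D$ over $\mathbb{C}(z,V)$ is transitive in every case, including the one you want to detect. The contradiction $L_{ij}L_{ji}=0$ you ran into is a symptom of this, not a normalization slip. Your transposition heuristic is also false. For $N=2$ under I--III, the two roots $\phi(z)+\mu_\pm(V)$ collide with square-root branching, hence a transposition, on $(v_1-v_2)^2=-4c^2$. Yet $D$ splits over $R$ for every $V$.

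The missing idea is to work with factorizations over $R$ at each fixed $V$. You then need to show that monodromy in $V$, at a fixed generic $z^0$, carries an $R$-factor at the base point $V^0$ to another $R$-factor at $V^0$. The paper does this as follows:
\begin{itemize}
\item The uniform support bound of Lemma~\ref{supp} turns ``$\prod_{j\in\mathcal J}(\lambda-\lambda_j(z,V))$ has coefficients in $R$'' into an analytic condition. Hence a single $\mathcal J$ works on a neighbourhood of $V^0$.
\item The identity theorem propagates this property along loops (Lemma~\ref{contin}).
\item The monodromy group must be all of $S_N$, not just transitive, since a transitive imprimitive group would allow block factorizations. The paper proves this by induction, using $v_N\to\infty$ to place $S_{N-1}$ in the stabilizer of $N$.
\item Gcd's of the factors indexed by $\sigma(\mathcal J)$ then isolate each root, and Lemma~\ref{closed} passes from generic $V$ to all $V$.
\end{itemize}

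Because splitting then holds only pointwise in $V$, your later steps need more than you wrote:
\begin{itemize}
\item Sending $v_j\to\infty$ requires a consistent labelling of the Laurent-polynomial roots over $\mathbb{T}^d$ (the paper takes $v_i=M^i$ and uses Gershgorin). It also requires convergence of their coefficients (bounded support plus Cauchy's formula).
\item The case $N=2$ needs the unique-factorization argument over infinitely many $v$, because $(a+v)^2+b$ is a square in $R$ only for each fixed $v$.
\item For III, the chordless-cycle computation gives the characteristic polynomial $\chi_0(\lambda-\phi(z),V)+\kappa(z^a+z^{-a})$, with $\chi_0$ free of $z$ and $\kappa\neq0$. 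This does not by itself exclude splitting. When $a\in m\mathbb{Z}^d$ it does split for special $V$: with unit weights and $V=0$ the roots are $\mu=\zeta^k w+\zeta^{-k}w^{-1}$, where $w^m=z^a$ and $\zeta=e^{2\pi i/m}$. So a genuine generic-$V$ input, such as Theorem~\ref{irreducible2}, is needed.
\end{itemize}
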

As a corollary, we prove the corresponding result for Bloch varieties.
\begin{cor}\label{Bloch}
The Bloch variety $\mathcal{B}_V$ is reducible for generic $V \in \mathbb{C}^N$ if and only if the three conditions in Theorem $\ref{main1}$ hold. If any of these conditions fails, the Bloch variety is irreducible for generic $V \in \mathbb{C}^N$.
\end{cor}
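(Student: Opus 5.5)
The plan is to deduce the corollary from Theorem~\ref{main1}. The only gap between ``$D$ reducible'' and ``$\mathcal B_V$ reducible'' is that a reducible polynomial might be a power of a single irreducible factor, in which case its zero set would still be irreducible. So I treat the two directions separately, and in the reducible direction I exhibit the factorization explicitly.

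\emph{Conditions fail.} Suppose one of conditions I--III fails. By Theorem~\ref{main1}, $D(z,V,\lambda)$ is irreducible for generic $V$. The ring $\mathbb C[z_1^{\pm1},\dots,z_d^{\pm1},\lambda]$ is a localization of a polynomial ring, hence a UFD. So for such $V$ the principal ideal $(D)$ is prime. Moreover, $D$ is monic of degree $N$ in $\lambda$, so it is not a unit. By the Nullstellensatz, its zero set $\mathcal B_V\subset(\mathbb C^\ast)^d\times\mathbb C$ is an irreducible hypersurface. Hence $\mathcal B_V$ is irreducible for generic $V$, and in particular it is not reducible for generic $V$.

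\emph{Conditions hold.} Suppose conditions I--III hold, and fix a spanning tree of the (connected) quotient graph rooted at $u_1$. Define $\beta_1=0$ and $\beta_j=\beta_i+\alpha_{ij}$ along tree edges. Condition III, applied to the fundamental cycles, gives $\alpha_{ij}=\beta_j-\beta_i$ for every edge $(u_i,u_j)$ of the quotient graph. Conjugate $L(z,V)$ by the diagonal matrix $\mathrm{diag}(z^{\beta_1},\dots,z^{\beta_N})$. This removes all $z$-dependence from the off-diagonal entries, since each such entry is a single monomial by condition II. By condition I, the diagonal entries become $p(z)+V_i$, where $p(z)=\sum_{a\in C}w_a z^a$ is the same Laurent polynomial for every $i$. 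Therefore
\[
L(z,V)\sim p(z)I+A+\mathrm{diag}(V),
\]
where $A$ is a constant matrix (the weighted adjacency matrix of the quotient graph). It follows that
\[
D(z,V,\lambda)=\prod_{k=1}^N\bigl(\lambda-p(z)-\mu_k(V)\bigr),
\]
where $\mu_1(V),\dots,\mu_N(V)$ are the eigenvalues of $A+\mathrm{diag}(V)$.

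The discriminant of the characteristic polynomial of $A+\mathrm{diag}(V)$ is a polynomial in $V$. It is not identically zero: for $V=t(1,2,\dots,N)$ with $t$ large, the eigenvalues are distinct by Gershgorin's theorem. So for generic $V$ the $\mu_k(V)$ are distinct. Each factor $\lambda-p(z)-\mu_k$ is linear in $\lambda$, hence irreducible. Two factors with distinct $\mu_k$ have disjoint zero sets. Hence $\mathcal B_V$ is a union of $N\ge2$ distinct irreducible hypersurfaces, and so it is reducible for generic $V$.

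Combining the two directions proves the equivalence together with the final sentence of the corollary. The step needing the most care is the gauge transformation, specifically verifying that condition III on all cycles yields a consistent potential $\beta$. Once $\beta$ is consistent, the rest is routine.
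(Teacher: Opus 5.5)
Your proposal is correct and follows essentially the same route as the paper. When conditions I--III hold, you conjugate $L(z,V)$ to $p(z)I+A+\mathrm{diag}(V)$, so $D$ splits into $N$ linear factors that are distinct for generic $V$. When a condition fails, you pass from the generic irreducibility of $D$ given by Theorem~\ref{main1} to irreducibility of $\mathcal B_V$. You only add details the paper leaves implicit: the prime-ideal/Nullstellensatz step, the Gershgorin argument for generically simple eigenvalues, and the explicit spanning-tree gauge, which the paper already built in its sufficiency theorem.
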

By the strong dichotomy established in \cite{faust2026generic}, we only need to prove that $D(z,V,\lambda)$ is reducible for every potential if and only if conditions $\mathrm{I}$, $\mathrm{II}$ and $\mathrm{III}$ hold. The sufficiency is relatively easy, which will be treated in Section $\ref{sim}$. The following is an outline of the proof for the necessity:

The first step is a reduction from reducibility to splitting. More precisely, we prove that if $D(z,V,\lambda)$ is reducible in 
\[\mathbb C[z^{\pm}][\lambda]=
\mathbb C[z_1^{\pm1},\ldots,z_d^{\pm1}][\lambda]\]
for every \(V\in\mathbb C^N\), then it in fact splits over \(\mathbb C[z^{\pm}]\) for every \(V\in\mathbb C^N\).
The key point is to study the monodromy of the roots of $D(z,V,\lambda)$ with respect to the potential parameter $V$ (see Subsection $\ref{analytic}$ for a brief introduction). For generic $(z^0,V^0)$, the polynomial $D(z^0,V^0,\lambda)$ has only simple roots, which can locally be represented by holomorphic branches $\lambda_1(z,V),\ldots,\lambda_N(z,V)$.
Analytic continuation along loops in the potential parameter space permutes these branches and gives rise to a monodromy group. In Section $\ref{mono}$ we will prove that for generic $z^0$, this monodromy group is the full symmetric group $S_N$. Suppose that $F(z,\lambda)$ is a monic factor of $D(z,V^0,\lambda)$ whose local roots near $z^0$ are indexed by a subset $I\subset\{1,\ldots,N\}$. For each permutation $\sigma\in S_N$, choose a loop in the potential parameter space whose monodromy is $\sigma$. In Section $\ref{redutosplit}$, we show that the analytic continuation of the roots along this loop can give another monic factor $F_\sigma(z,\lambda)$ of $D(z,V^0,\lambda)$, whose local roots are indexed by $\sigma(I)$. Since the monodromy group is the symmetric group $S_N$, by choosing several suitable permutations and taking greatest common divisors of the corresponding factors, we can obtain a monic factor that is linear in $\lambda$, and its unique root  near $z^0$ is $\lambda_j(z,V^0)$, where $j \in \{1,...,N\}$ is arbitrary. Therefore, $\lambda_j(z,V^0) \in \mathbb{C}[z^{\pm}]$ and $D(z,V^0,\lambda)$ splits over $\mathbb{C}[z^{\pm}]$. The above argument is valid for generic $V^0 \in \mathbb{C}^N$.By an analogue of the strong dichotomy in \cite{faust2026generic}, we deduce that $D(z,V,\lambda)$ splits over $\mathbb{C}[z^{\pm}]$ for every $V \in \mathbb{C}^N$.

The second part is an inheritance principle for the splitting property. In Section $\ref{splitcase}$, we show that if $D(z,V,\lambda)$ splits over $\mathbb{C}[z^{\pm}]$ for every $V \in \mathbb{C}^N$, then for any $(N-1) \times(N-1)$ principal submatrix $L_{N-1}(z,V')$ obtained by deleting the $i$-th row and column of $L(z,V)$, its characteristic polynomial splits over $\mathbb{C}[z^{\pm}]$ for every $V' = (v_1,...,v_{i-1},v_{i+1},...,v_{N}) \in \mathbb{C}^{N-1}$. In other words, the splitting property is preserved after deleting a vertex from the quotient graph. By repeatedly applying this result, we reduce the problem to the case $N=2$, which is quite easy to solve. Applying the result for $N=2$ to all $2\times2$ principal submatrices of $L(z,V)$, we deduce that the conditions $\mathrm{I}$ and $\mathrm{II}$ hold. Once the first two conditions are established, the condition $\mathrm{III}$ follows relatively easily.

The rest of the paper is arranged as follows. In Section $\ref{bas}$, we introduce the setting of periodic graph operator and the concept of monodromy group. In Section $\ref{sim}$, we prove the sufficiency and establish the necessity in the case of $N=2$. In Section $\ref{pre}$, we provide some useful lemmas. In Sections $\ref{mono}$ and $\ref{redutosplit}$, we reduce Theorem $\ref{main1}$ to the splitting case. In Section $\ref{splitcase}$, we prove Theorem $\ref{main1}$  under the splitting assumption.
\section{Basic}\label{bas}
\subsection{Periodic graph and Periodic graph operator}
A locally finite graph $\Gamma = (\mathcal{V},\mathcal{E)}$ is said to be $\mathbb{Z}^d$ periodic if there is a $\mathbb{Z}^d$-action on $\Gamma$ with the following properties:
\begin{enumerate}
    \item The action is free, which means that for any non-zero element in $\mathbb{Z}^d$, its action does not fix any vertex.
    \item If $(u,v) \in \mathcal{E}$, then $(u+a,v+a) \in \mathcal{E}$ for any $a \in \mathbb{Z}^d$.
    \item The action is cocompact, i.e., $|\mathcal{V}/\mathbb{Z}^d| < \infty$.
\end{enumerate}

For any $\mathbb{Z}^d$-periodic graph $\Gamma$, we can choose a finite set $W \subset \mathcal{V}(\Gamma)$ such that 
\[ \mathcal{V}(\Gamma) = \bigcup_{a \in \mathbb{Z}^d} W +a,\]
and $|W|= |\mathcal{V}(\Gamma)/\mathbb{Z}^d|$. We call it a fundamental domain of $\Gamma$. Replacing each vertex in the fundamental domain by an arbitrary vertex from the same equivalence class will give another fundamental domain.

A $\mathbb{Z}^d$-periodic edge weight $E:\mathcal{E}(\Gamma) \rightarrow\mathbb{C}$ is a function on $\mathcal{E}(\Gamma)$ that satisfies for every $(u,v) \in \mathcal{E}(\Gamma)$, $a \in \mathbb{Z}^d$,
\[ E((u,v)) = E((v,u)), \quad E((u,v)) =  E((u+a,v+a)).\]
In this paper, we assume that $E((u,v))\neq 0$ for every $(u,v) \in \mathcal{E}(\Gamma)$. Otherwise, we can simply delete this edge.

A $\mathbb{Z}^d$-periodic potential $V:\mathcal{V}(\Gamma) \rightarrow \mathbb{C}$ is a function on $\mathcal{V}(\Gamma)$ that satisfies $V(u + a) = V(u)$ for every $u \in \mathcal{V}(\Gamma)$, $a \in \mathbb{Z}^d$.

For a $\mathbb{Z}^d$-periodic graph with the edge weight function $E$ and the potential function $V$, we can define the $\mathbb{Z}^d$-periodic graph operator $\mathcal{L}_V$ on $\ell^2(\mathcal{V}(\Gamma))$ as follows:
\[ \mathcal{L}_V(f)(u) = V(u)f(u) + \displaystyle\sum_{(u,v) \in \mathcal{E}(\Gamma)}^{}E((u,v))f(v).\]
Note that for any self-loop, the corresponding terms can be absorbed into potentials, so we may assume that $\Gamma$ has no self-loops.
\subsection{Floquet theory}
Fix a fundamental domain $W = \{u_1,...,u_N\}$. For a finitely supported function $f$ on $\mathcal{V}(\Gamma)$, define its Floquet transform by
\[\widehat{f}(z,u_i)
=
\sum_{a\in\mathbb{Z}^d} f(u_i+a)z^{-a},
\qquad
u_i\in W,\ z\in\mathbb{T}^d,\]
where $z^a=z_1^{a_1}\cdots z_d^{a_d}$ for $a=(a_1,\dots,a_d)\in\mathbb{Z}^d$. Note that if we define a function on $\mathbb{Z}^d$ by $a \rightarrow f(u_i +a)$, then $\widehat{f}(z,u_i)$ is exactly the Fourier transform of this function. Applying Parseval's identity for each $i \in \{1,...,N\}$ and taking a sum, we obtain
\[\sum_{u\in W}
\int_{\mathbb{T}^d}
|\widehat{f}(z,u)|^2\,dz
=
\sum_{v\in\mathcal{V}(\Gamma)}
|f(v)|^2.\]
For any $z \in \mathbb{T}^d$, consider $(\widehat{f}(z,u_1),\ldots ,\widehat{f}(z,u_N))^{T}$ as a vector in $\mathbb{C}^{N}$. The map $\mathcal{F}:f\mapsto\widehat{f}$ extends to a unitary operator
\[
\mathcal{F}:
\ell^2(\mathcal{V}(\Gamma))
\longrightarrow
L^2(\mathbb{T}^d,\mathbb{C}^{N}).
\]
Therefore, the operator
\[ \mathcal{F}\mathcal{L_V}\mathcal{F}^*: L^2(\mathbb{T}^d,\mathbb{C}^{N}) \rightarrow L^2(\mathbb{T}^d,\mathbb{C}^{N})\]
is unitarily equivalent to $\mathcal{L}_V$. To describe this transformed operator, it is convenient to introduce
\begin{align}\label{laur}
  l_{ij}(z)
:=
\sum_{\substack{
a\in\mathbb{Z}^d\\
(u_i,u_j+a)\in\mathcal{E}(\Gamma)
}}
E((u_i,u_j+a))z^a  
\end{align}
for $u_i,u_j \in W$, $z \in (\mathbb{C}^*)^d$. Since $\Gamma$ is locally finite, the sum above has finitely many terms. Hence $l_{ij}(z)$ is a Laurent polynomial in $z_1,\dots,z_d$. By the symmetry of the edge weight function, 
\[ l_{ij}(z) = l_{ji}(z^{-1} ).\]

Now we consider the ``fiber'' of $\mathcal{F}\mathcal{L}_V\mathcal{F}^*$, i.e., we fix a $z \in \mathbb{T}^d$  and consider it as an operator from $\mathbb{C}^N$ to $\mathbb{C}^N$. By direct calculation, we have
\begin{align}\label{Flo}
\mathcal{F}\mathcal{L}_V\mathcal{F}^*(\widehat{f})(z,u_i)
=
V(u_i)\widehat{f}(z,u_i)
+ \sum_{j = 1}^{N}
l_{ij}(z)\widehat{f}(z,u_j).
\end{align}
For each $z \in \mathbb{T}^d$, define an $N\times N$ matrix $L(z,V)$ by
\begin{align}\label{Fmatrix}
L(z,V)_{i,j} = \delta_{ij}V(u_i) + l_{ij}(z),    
\end{align}
where $\delta$ is the Kronecker delta function. We call it the Floquet matrix of the operator $\mathcal{L}_V$. By $\eqref{Flo}$, $\mathcal{F}\mathcal{L}_V\mathcal{F}^*$ is the direct integral of $L(z,V)$ over $\mathbb{T}^d$, i.e., if $\widehat{f} \in L^2(\mathbb{T}^d,\mathbb{C}^{N})$ and $\mathcal{F}\mathcal{L}_V\mathcal{F}^*(\widehat{f})  = \widehat{g}$, then 
\[ (\widehat{g}(z,u_1),\cdots,\widehat{g}(z,u_N))^{T} = L(z,V)(\widehat{f}(z,u_1),\cdots,\widehat{f}(z,u_N))^{T}\]

From $\eqref{laur}$ and $\eqref{Fmatrix}$, we can reformulate the conditions $\mathrm{I}$ and $\mathrm{II}$ as follows:
\begin{enumerate}
    \item[]$\mathrm{I}$.  For any distinct $i,j \in \{1,...,N\}$, $l_{ii}(z) = l_{jj}(z)$
    \item[] $\mathrm{II}$. For any distinct $i,j \in \{1,...,N\}$, $l_{ij}(z)$ is either identically zero or a monomial. 
\end{enumerate}
\subsection{Discriminant}
Let $p(\lambda)=a_n\lambda^n+\cdots+a_0$, where $a_n\ne0$. Its discriminant is defined by
\begin{align}\label{disc}
\operatorname{Disc}_{\lambda}(p)
=\frac{(-1)^{n(n-1)/2}}{a_n}\det S(p,p'),    
\end{align}
where $S(p,p')$ is the $(2n-1)\times(2n-1)$ Sylvester matrix
\[
S(p,p')=\left(
\begin{smallmatrix}
a_n & a_{n-1} & \cdots & a_1 & a_0 & 0 & \cdots & 0\\
0 & a_n & a_{n-1} & \cdots & a_1 & a_0 & \ddots & \vdots\\
\vdots & \ddots & \ddots & \ddots & & \ddots & \ddots & 0\\
0 & \cdots & 0 & a_n & a_{n-1} & \cdots & a_1 & a_0\\[4pt]
na_n & (n-1)a_{n-1} & \cdots & a_1 & 0 & 0 & \cdots & 0\\
0 & na_n & (n-1)a_{n-1} & \cdots & a_1 & 0 & \ddots & \vdots\\
\vdots & \ddots & \ddots & \ddots & & \ddots & \ddots & 0\\
0 & \cdots & 0 & 0 & na_n & (n-1)a_{n-1} & \cdots & a_1 
\end{smallmatrix}\right).
\]
The discriminant vanishes if and only if $p$ has a multiple root. We set 
\[ \Delta(z,V) = \operatorname{Disc}_\lambda D(z,V,\lambda).\]
From $\eqref{disc}$, $\Delta(z,V)$ is a polynomial in $V$ and a Laurent polynomial in $z$.
\subsection{Analytic continuation of roots and monodromy group}\label{analytic}
A classical theme in the theory of algebraic equations is the correspondence between the monodromy of polynomial roots and the associated Galois group over a function field. This provides a natural bridge between analytic continuation and algebraic properties of polynomial equations. Related studies can be found in \cite{khovanskii2019representability,Harris1979,Ritt1922}.
Here, we focus on explaining how this concept applies to the setting of the present paper. For any fixed $z^0 \in (\mathbb{C}^*)^d$, $\Delta(z^0,V) \neq 0$ on a non-empty Zariski open set $\mathcal{U}$. We choose a $V^0 \in \mathcal{U}$ as the base point. Near $(z^0,V^0)$, the roots of $D(z,V,\lambda)$ can be represented as jointly holomorphic functions $\lambda_1(z,V),....,\lambda_N(z,V)$. Suppose that $\gamma :[0,1] \rightarrow \mathcal{U}$ is a loop based at $V^0$. Then $\lambda_j(z,V)$ can be analytically continued along $\gamma$.  More specifically, we can choose a neighborhood $\Omega$ of $z^0$, a subdivision
\[ 0 = t_0 < t_1<\cdots <t_r = 1,\]
open balls $B_q$, $q = 1,..,r$ with $\gamma|_{[t_{q-1},t_{q}]} \subset B_q$, and
\[\lambda_j^{(q)}(z,V)  \in \mathcal{O}(\Omega\times B_q), \,q = 1,...,r, j=1,...,N\]
such that:
\begin{enumerate}
    \item $\lambda_1^{(q)}(z,V),...,\lambda_N^{(q)}(z,V)$ are the distinct roots of $D(z,V,\lambda)$ for every $(z,V) \in \Omega \times B_q$.
    \item $\lambda_j^{(1)}(z,V) = \lambda_j(z,V)$ for every $V \in B_1$
    \item $\lambda_j^{(q)}(z,V)|_{\Omega \times (B_q\cap B_{q+1})} = \lambda_j^{(q+1)}(z,V)|_{\Omega \times (B_q\cap B_{q+1})}$, $q = 1,...,r-1$. 
\end{enumerate}
For a fixed $i \in \{1,...,N\}$, $\lambda_i^{(r)}(z,V)$ may not coincide with $\lambda_i(z,V)$ on $\Omega \times( B_r\cap  B_{1})$. However, since $\lambda_i^{(r)}(z,V)$ is also a root of $D(z,V,\lambda)$, by condition (1), it must coincide with some $\lambda_j(z,V)$ on $\Omega \times (B_r\cap B_{1})$. Note that if $\lambda_{i_1}^{(r)}(z,V)$ and $\lambda_{i_2}^{(r)}(z,V)$ coincide with the same $\lambda_j(z,V)$, then by condition (3) and the unique continuation theorem, $\lambda_{i_1}(z,V)$ must coincide with $\lambda_{i_2}(z,V)$ on $\Omega \times B_1$, which contradicts condition (1). Therefore, every loop in $\mathcal{U}$ induces a permutation in $S_N$.  It is easy to prove that if $\gamma_1$ and $\gamma_2$ are homotopic, then they induce the same permutation. Thus we can define a map 
\[ \varphi_{z^0} : \pi_1(U,V^0) \rightarrow S_N.\]
The image of $\varphi_{z^0}$ is called the \textbf{monodromy group} of $D(z^0,V,\lambda)$ (the choice of the base point will not change the monodromy group up to a conjugacy). When $z^0$ is fixed, $D(z^0,V,\lambda) \in \mathbb{C}[v_1,...,v_N][\lambda]$. From the well-known monodromy-Galois correspondence, 
\[ \mathrm{Im}(\varphi_{z^0}) \cong \mathrm{Gal}(D(z^0,V,\lambda)/\mathbb{C}(v_1,...,v_N)).\]
An elementary proof can be found in \cite{khovanskii2019representability}. A natural question is whether the monodromy group changes with the choice of $z^0$. We can prove that for generic choices of $z^0$, the monodromy group is the same (up to a conjugacy). This fact is not easy to obtain in the general case, but the special structure of $D(z,V,\lambda)$ simplifies the situation. In Section $\ref{mono}$, we will prove that for generic $z^0$, the monodromy group of $D(z^0,V,\lambda)$ is $S_N$.

For convenience, we collect some notation that will be used in the proofs.
\begin{enumerate}
    \item $R = \mathbb{C}[z_1^{\pm},...,z_d^{\pm}]$ is the Laurent polynomial ring. $M_n(R)$ denotes the set of $n\times n$ matrices with entries in $R$.
    \item $V = (v_1,...,v_N) \in \mathbb{C}^N$ is the potential function. 
    \item For $\alpha = (\alpha_1,...,\alpha_d) \in \mathbb{Z}^d$, $z^\alpha = z_1^{\alpha_1} \cdots z_d^{\alpha_d}$.
    \item Under condition $\mathrm{II}$, for any cycle
    \[ \mathcal{P} : u_{i_1} \rightarrow \cdots \rightarrow u_{i_m}\rightarrow u_{i_1}\]
    in the quotient graph, define 
    \[\mathrm{quasi}(\mathcal{P}) = \alpha_{i_1i_2} +\cdots +\alpha_{i_mi_1},\]
    where $\alpha_{i_ji_{j+1}}$ is the unique element in $C_{i_ji_{j+1}}$.
    \item For any (Laurent) polynomial $F(\xi)$, we use $[\xi^\beta]F$ to denote the coefficient of the term $\xi^\beta$ in $F$.
    \item For an open set $U \subset \mathbb{C}^n$, $\mathcal{O}(U)$ denotes the holomorphic function ring on $U$.
\end{enumerate}

\section{Simple cases}\label{sim}
In this section, we consider several simple cases whose conclusions provide useful insight into the proof of the general case. We first explain the meaning of ``generic'' used in this paper. A subset $Z \subset \mathbb{C}^N$ is called \textbf{Zariski closed} if there exist polynomials
\[ P_1,....,P_k \in \mathbb{C}[v_1,...,v_N]\]
such that 
\[ Z = \{V \in \mathbb{C}^N\, | \,P_1(V) = \cdots=P_k(V) = 0 \}.\]
A subset $U \subset \mathbb{C}^N$ is Zariski open if its complement is Zariski closed. We say a property holds for generic potentials if there exists a non-empty Zariski open set $U \subset \mathbb{C}^N$ such that this property holds for any $V \in U$.

From Theorem 2.1 in \cite{faust2026generic}, the set 
\[\mathcal{R} = \{V \in \mathbb{C}^N \, | \, D(z,V,\lambda)\, \mbox{is reducible in}\, R[\lambda] \} \]
is Zariski closed. Consequently, reducibility exhibits the following dichotomy with respect to the potential parameters: 
\begin{enumerate}
    \item $D(z,V,\lambda)$ is reducible for every $V \in \mathbb{C}^N$.
    \item $D(z,V,\lambda)$ is irreducible for generic $V \in \mathbb{C}^N$.
\end{enumerate}
Therefore, we only need to prove that $D(z,V,\lambda)$ is reducible for every $V \in \mathbb{C}^N$ if and only if the three conditions in Theorem $\ref{main1}$ hold. We begin with the sufficiency, which is comparatively easy to establish, and we can obtain a stronger result:
\begin{thm}
Assume that conditions $\mathrm{I}$, $\mathrm{II}$ and $\mathrm{III}$ hold. Then for any $V \in \mathbb{C}^N$, $D(z,V,\lambda)$ splits with respect to $\lambda$ over $R$
\end{thm}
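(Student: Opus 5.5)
Under conditions I and II we write $L(z,V)=\ell(z)I+\mathrm{diag}(v_1,\dots,v_N)+A(z)$. Here $\ell(z)=l_{11}(z)=\cdots=l_{NN}(z)$ is the common diagonal Laurent polynomial, and $A(z)$ is the off-diagonal part. Each nonzero entry of $A(z)$ is a monomial $A_{ij}(z)=E_{ij}z^{\alpha_{ij}}$, where $E_{ij}\neq 0$ is the weight of the unique edge and $\alpha_{ji}=-\alpha_{ij}$. The plan is to show that condition III makes $A(z)$ diagonally similar, over $R$, to a constant matrix. Concretely, we look for a diagonal matrix $P(z)=\mathrm{diag}(z^{\beta_1},\dots,z^{\beta_N})$ with $\beta_i\in\mathbb{Z}^d$ such that $P(z)A(z)P(z)^{-1}=A_0$ has constant entries $E_{ij}$. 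Since diagonal matrices commute with $P$, we would then get
\[
\det(\lambda I-L(z,V))=\det\bigl((\lambda-\ell(z))I-\mathrm{diag}(V)-A_0\bigr)=\chi\bigl(\lambda-\ell(z)\bigr),
\]
where $\chi(\mu)=\det(\mu I-\mathrm{diag}(V)-A_0)\in\mathbb{C}[\mu]$ is the characteristic polynomial of a constant matrix. Over $\mathbb{C}$ it factors as $\prod_k(\mu-\mu_k)$, so $D(z,V,\lambda)=\prod_{k=1}^N(\lambda-\ell(z)-\mu_k)$ splits over $R$, as claimed.

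The heart of the argument is constructing the exponents $\beta_i$. The $(i,j)$ entry of $PAP^{-1}$ is $E_{ij}z^{\alpha_{ij}+\beta_i-\beta_j}$. We therefore need $\beta_j-\beta_i=\alpha_{ij}$ for every edge $(u_i,u_j)$ of the quotient graph; that is, $\alpha$ must be a coboundary on the quotient graph. This is the standard potential-function argument. The quotient graph is connected, so fix a spanning tree rooted at $u_1$ and set $\beta_1=0$. Define $\beta_j$ as the sum of the $\alpha$'s along the tree path from $u_1$ to $u_j$. This is well defined because the tree path is unique, and the antisymmetry $\alpha_{ji}=-\alpha_{ij}$ (from $l_{ij}(z)=l_{ji}(z^{-1})$) makes the orientation of each edge consistent.

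It remains to handle a non-tree edge $(u_i,u_j)$. Together with the tree paths from $u_1$ to $u_i$ and from $u_1$ to $u_j$, it forms a closed walk, and this walk reduces to a cycle after cancelling the backtracked edges, whose contributions cancel by antisymmetry. Condition III says the sum of $\alpha$ around this cycle is zero, which is exactly $\beta_i+\alpha_{ij}-\beta_j=0$. The edges with $A_{ij}=0$ impose no constraint.

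The only step needing a little care is this closed-walk reduction: the walk may repeat vertices, and we must check that III applied to cycles suffices. We handle it by decomposing any closed walk into a sum of simple cycles plus backtracking pairs, each of which contributes zero. The other steps are straightforward linear algebra.
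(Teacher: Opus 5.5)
Your proof is correct and matches the paper's: the paper also sums the $\alpha$'s along paths from $u_1$ to get exponents $\tau_j$ with $\alpha_{ij}=\tau_j-\tau_i$, uses condition III for path-independence, and conjugates by $Q=\mathrm{diag}(z^{\tau_1},\ldots,z^{\tau_N})$ to reach $\phi(z)I+\mathrm{diag}(v_1,\ldots,v_N)+H$ with $H$ constant. Your spanning-tree version makes the path-independence step more explicit than the paper's one-line appeal to III, and it is simpler than you suggest: the fundamental cycle of a non-tree edge is automatically a simple cycle, so no general closed-walk decomposition is needed.
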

\begin{proof}
Since the quotient graph is connected, for any vertex $u_j$, we can choose a path
\[\mathcal{P}_j: u_{1} = u_{i_1} \rightarrow\cdots\rightarrow u_{i_m} = u_{j} \]
and define 
\[ \tau_1 = 0,\quad \tau_j =  \displaystyle\sum_{k = 1}^{j-1}\alpha_{i_ki_{k+1}}, \quad 2 \leq j\leq m.\]
 From condition $\mathrm{III}$, the value of $\tau_j$ is independent of the path we choose. For any $i,j \in \{1,...,N\}$ such that $u_i$ and $u_j$ are adjacent, we can obtain a path from $u_1$ to $u_j$ by adjoining the edge $(u_i,u_j)$ to a path from $u_1$ to $u_i$, thus
\begin{align}\label{cancel}
  \alpha_{ij} = \tau_j - \tau_i.   
\end{align}
Set
\[ Q = \mathrm{diag}(z^{\tau_1},...,z^{\tau_N}), \quad 
\widetilde{L}(z,V) =  QL(z,V)Q^{-1}.\]
we have 
\[\widetilde{l}_{ij}(z) = z^{\tau_i - \tau_j}l_{ij}(z). \]
By $\eqref{cancel}$, $\widetilde{l}_{ij}(z)$ is a constant for any distinct $i,j$. Combined with condition $\mathrm{I}$, there exists $\phi(z) \in R$ such that 
\[ \widetilde{L}(z,V) = \phi(z) I + \mathrm{diag}(v_1,...,v_N) + H,\]
where $H$ is a constant matrix that depends on the edge weight. For any fixed $V \in \mathbb{C}^N$, assume that 
\[ \det(\lambda I- \mathrm{diag}(v_1,...,v_N)-H) = \displaystyle\prod_{j = 1}^{N}(\lambda-\lambda_j(V)),\]
then we have 
\begin{align}
    D(z,V,\lambda) = \det(\lambda I -\widetilde{L}(z,V)) = \displaystyle\prod_{j = 1}^{N}(\lambda-\phi(z) - \lambda_j(V)).
\end{align}
\end{proof}
From a graph-theoretic point of view, these three conditions mean that one can choose a suitable fundamental domain such that every edge between distinct equivalence classes lies within a single translate of it, while different translates are connected only through vertices in the same equivalence class. Moreover, the connection pattern within each equivalence class, together with the corresponding edge weights, is independent of the chosen vertex. In other words, we can construct a graph on $\mathbb Z^d$ with the edge set
\[ \mathcal{E}(\mathbb{Z}^d) = \{ (n,n+a) \,|\, n \in \mathbb{Z}^d, a \in \mathcal{A} \},\]
where $\mathcal{A}$ is a finite subset of $\mathbb{Z}^d$, such that $\Gamma$ can be represented as the Cartesian product of the quotient graph and this graph on $\mathbb Z^d$.

The proof of necessity is more complicated. We first treat the case $N=2$. As we shall see later, the general case can be reduced to this special case. Note that in the case of $N = 2$, condition $\mathrm{III}$ is satisfied automatically.
\begin{thm}\label{simple}
If $D(z,V,\lambda)$ is reducible for any $V \in \mathbb{C}^2$, then $l_{11}(z) = l_{22}(z)$, and there exists $\alpha \in \mathbb{Z}^d$, $c\in \mathbb{C}^*$, such that
\[l_{12}(z) = cz^\alpha, \quad l_{21}(z) = cz^{-\alpha}.\]
\end{thm}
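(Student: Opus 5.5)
For $N=2$ the dispersion polynomial is the quadratic
\[
D(z,V,\lambda)=\lambda^2-(l_{11}+l_{22}+v_1+v_2)\lambda+(l_{11}+v_1)(l_{22}+v_2)-l_{12}l_{21}.
\]
A monic quadratic in $\lambda$ over the UFD $R$ is reducible in $R[\lambda]$ exactly when it has a root in $R$. This holds exactly when its discriminant is a perfect square in $R$, since $2$ is invertible in $R$. The discriminant is
\[
\Delta(z,V)=\bigl(l_{11}(z)-l_{22}(z)+v_1-v_2\bigr)^2+4\,l_{12}(z)l_{21}(z).
\]
It depends on $V$ only through $t=v_1-v_2\in\mathbb{C}$. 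The hypothesis therefore says: for every $t\in\mathbb{C}$ there is $g_t\in R$ with $(f+t)^2+4h=g_t^2$, where $f=l_{11}-l_{22}$ and $h=l_{12}l_{21}$. Since the quotient graph is connected, $l_{12}\neq0$, hence $h\neq 0$.

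Factor the identity as $(g_t-f-t)(g_t+f+t)=4h$. Put $p_t=g_t-f-t$ and $q_t=g_t+f+t$. Then $p_tq_t=4h$ and $q_t-p_t=2(f+t)$. Because $h\ne0$ is a fixed Laurent polynomial, its factorization in $R$ is into finitely many irreducibles up to units $cz^\beta$. Every divisor of $4h$ therefore has the form $c\,z^\beta d$, where $d$ ranges over a finite set $\mathcal{D}$ of divisors of $h$ up to units, $c\in\mathbb{C}^*$ and $\beta\in\mathbb{Z}^d$. Uncountably many $t$ are assigned to this finite set, so one $d\in\mathcal{D}$ serves for infinitely many $t$. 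For these $t$ we have
\[
p_t=c_tz^{\beta_t}d,\qquad q_t=4c_t^{-1}z^{-\beta_t}h/d,
\]
and hence
\[
2(f+t)=4c_t^{-1}z^{-\beta_t}e-c_tz^{\beta_t}d,\qquad e=h/d .
\]
Take two such values $t\neq t'$ and subtract the two identities. The left side becomes the nonzero constant $2(t-t')$. The right side is a combination of $z^{\pm\beta}$-shifts of $d$ and $e$.

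The main obstacle is to extract from this that $d$ and $e$ are monomials. I will argue with Newton polytopes. If $d$ is not a monomial, its Newton polytope $P$ has positive dimension. Since $f$ is independent of $t$, I compare the supports of $c_tz^{\beta_t}d$ across the infinitely many $t$. The expression $c_tz^{\beta_t}d-4c_t^{-1}z^{-\beta_t}e$ equals $-2f-2t$ for every such $t$; it is a fixed polynomial plus a varying constant. Consider a generic linear functional $\ell$ and look at the $\ell$-extreme terms. For all but finitely many $t$ the extreme terms of $-2f-2t$ coincide with those of $-2f$. So the extreme terms of $z^{\beta_t}d$ must either cancel against those of $z^{-\beta_t}e$ or equal those of $-2f$. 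This pins down $\beta_t$ to finitely many values. Passing to a further infinite subfamily, $\beta_t=\beta$ is constant. Then $c_tz^\beta d-4c_t^{-1}z^{-\beta}e+2t=-2f$ holds for infinitely many distinct $c_t$. The $c_t$ are distinct: otherwise two of the identities would have equal left sides, forcing $t=t'$. These identities are linear in $c_t$, $c_t^{-1}$ and $1$. Comparing three of them with distinct $c_t$, the functions $1$, $c$, $c^{-1}$ are linearly independent on any three distinct points, so $z^\beta d$ and $z^{-\beta}e$ must both be constants. Hence $d$ and $e$ are monomials, $h=l_{12}l_{21}$ is a monomial, and so $f=$ constant. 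Concretely, with $c\,z^{\beta}d=a$ and $4z^{-\beta}e=b$ constants, the identity reads $-2f=a c_t-b c_t^{-1}-2t$ with $f$ independent of $t$.

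To finish, since $R$ is a UFD whose units are exactly the monomials, $l_{12}l_{21}$ being a monomial forces $l_{12}$ and $l_{21}$ to be monomials. From $l_{21}(z)=l_{12}(z^{-1})$ we get $l_{12}=cz^\alpha$ and $l_{21}=cz^{-\alpha}$. The constant $f=l_{11}-l_{22}$ is the coefficient of $z^0$ in the difference. Both $l_{11}$ and $l_{22}$ have zero constant term, because there are no self-loops ($0\notin C_{ii}$). Therefore $f=0$, i.e.\ $l_{11}=l_{22}$.
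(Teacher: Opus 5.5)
Your proof is correct and is essentially the paper's argument: the discriminant is a square in $R$, you factor $4l_{12}l_{21}$ with a pigeonhole over divisor classes up to units, pin the monomial shift $\beta_t$ by comparing supports, and then use three values of $t$ with distinct $c_t$ to force $z^{\beta}d$, $z^{-\beta}e$ and $f$ to be constant. There is one slip in your support step: the $\ell$-extreme term of $-2f-2t$ need not be that of $-2f$ (for instance it is the constant $-2t$ when $f=0$, which is exactly the case you end up in); your argument only needs that, for all but at most one value of $t$, the support of $-2f-2t$ is the fixed set $\operatorname{supp}(f)\cup\{0\}$, and this still confines $\beta_t$ to a finite set as you claim.
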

\begin{proof}
 If $D(z,V,\lambda)$ is reducible for every potential, then for any $V = (v_1,v_2) \in \mathbb{C}^2$, there exist $\lambda_1(z,V),\lambda_{2}(z,V) \in R$ such that 
\[ (\lambda - l_{11}(z) - v_1)(\lambda - l_{22}(z) - v_2) - l_{12}(z)l_{12}(z^{-1}) = (\lambda - \lambda_{1}(z,V))(\lambda - \lambda_{2}(z,V)).\]
By analogy with the discriminant of an ordinary quadratic equation, we obtain
\[(l_{11}(z) - l_{22}(z) + v_1 - v_2)^2 + 4l_{12}(z)l_{21}(z) = (\lambda_{1}(z,V) -\lambda_{2}(z,V))^2. \]
Define $a(z) = l_{11}(z) - l_{22}(z)$, $b(z) =  4l_{12}(z)l_{12}(z^{-1})$, then for  any $v \in \mathbb{C}$, there exists $f_v \in R$ such that
\begin{align}\label{decmo}
    (f_v(z) + a(z) + v)(f_v(z) - a(z) - v) = p_v(z)q_v(z) = b(z).
\end{align}
Since the quotient graph is connected, $b$ is not identically zero. By the fact that $R$ is a unique factorization domain, there exist $p,q \in R$ such that $pq = b$ and for infinitely many $v$, there exist corresponding $\lambda_v \in \mathbb{C}^*$ and $m_v \in \mathbb{Z}^d$ such that
\[p_v = \lambda_v z^{m_v}p, \quad q_v = \lambda_v^{-1}z^{-m_v}q. \]
Notice that 
\begin{align}\label{minus}
p_v(z) - q_v(z) = 2(a(z)+v),   
\end{align}
comparison of the support on both sides implies that $m_v$ takes only finitely many values as $v$ varies. Hence, there exists an infinite set $\Lambda \subset \mathbb{C}$ and $m \in \mathbb{Z}^d$  such that
\begin{align}\label{expre}
 p_v = \lambda_vz^m p, \quad q_v = \lambda_v^{-1}z^{-m}q, \quad \forall\, v \in \Lambda.   
\end{align}
From \eqref{minus}, we deduce that for any distinct $v,v' \in \Lambda$, $\lambda_v \neq \lambda_{v'}$. Taking three distinct $v,v',v'' \in \Lambda$ and using $\eqref{expre}$, we have
\begin{align}
\begin{aligned}
(\lambda_{v}-\lambda_{v'})z^mp - (\lambda_{v}^{-1}-\lambda_{v'}^{-1})z^{-m}q = 2(v-v')
\\ (\lambda_{v}-\lambda_{v''})z^mp - (\lambda_{v}^{-1}-\lambda_{v''}^{-1})z^{-m}q = 2(v-v'')   
\end{aligned}
\end{align}
Combining the above two equations, we deduce that $z^mp$ and $z^{-m}q$ are constant, thus $b(z)$ and $a(z)$ are constant. Note that $l_{12}(z)l_{12}(z^{-1})$ is constant if and only if $l_{12}(z)$ is a monomial. Since $\Gamma$ has no self-loop, $l_{ii}(z)$ does not contain a constant term, it follows that $l_{11}(z) = l_{22}(z)$.
  
\end{proof}
\section{Preliminary lemmas}\label{pre}
In this section, we present some technical lemmas that are useful for subsequent proofs.
\begin{lem}\label{converge}
Suppose that 
\[ A = \begin{pmatrix}
 A_{n-1}&\alpha \\ 
\beta^{T} & a_{nn}
\end{pmatrix} \in M_n(\mathbb{C}),\]
and $A_{n-1}$ has simple eigenvalues $\lambda_1$,...,$\lambda_{n-1}$. Set 
\[B(u) = \mathrm{diag}(0,..,0,u) + A, \quad u \in \mathbb{C}.\]
For some sufficiently large $M>0$, there exist $n-1$ holomorphic functions $\lambda_1(u),...,\lambda_{n-1}(u)$ defined on $\{ u \in \mathbb{C}\, |\,|u|> M\}$, such that for any $j = 1,...,n-1$, $\lambda_j(u)$ is an eigenvalue of $B(u)$ and $\lambda_j(u) \rightarrow \lambda_j$ as $u \rightarrow \infty$. The remaining eigenvalue of $B(u)$ has the form $u + O(1)$.
\end{lem}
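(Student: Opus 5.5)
Set $w = 1/u$ and rescale the characteristic polynomial so that the problem becomes a small perturbation at $w=0$. The starting point is the Schur complement expansion along the last row and column:
\[
\det(\lambda I - B(u)) = (\lambda - a_{nn} - u)\,\det(\lambda I - A_{n-1}) - \beta^{T}\operatorname{adj}(\lambda I - A_{n-1})\,\alpha .
\]
Write $p(\lambda) = \det(\lambda I - A_{n-1})$ and $q(\lambda) = \beta^{T}\operatorname{adj}(\lambda I - A_{n-1})\alpha$. Here $q$ is a polynomial of degree at most $n-2$ that does not depend on $u$. Multiplying by $-w$ gives
\[
G(\lambda,w) := -w\det(\lambda I - B(1/w)) = p(\lambda) - w\big((\lambda - a_{nn})p(\lambda) - q(\lambda)\big),
\]
which is a polynomial in $(\lambda,w)$. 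For $w\neq 0$, its zeros in $\lambda$ are exactly the eigenvalues of $B(1/w)$. At $w=0$ we have $G(\lambda,0)=p(\lambda)$, and its roots are the simple eigenvalues $\lambda_1,\dots,\lambda_{n-1}$.

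\textbf{Step 1: the $n-1$ bounded branches.} Since each $\lambda_j$ is a simple root, $\partial_\lambda G(\lambda_j,0) = p'(\lambda_j) \neq 0$. The holomorphic implicit function theorem then gives a disc $|w|<\varepsilon$ and holomorphic functions $\mu_j(w)$ with $\mu_j(0)=\lambda_j$ and $G(\mu_j(w),w)=0$. We shrink $\varepsilon$ so that the $\mu_j(w)$ stay pairwise distinct. Setting $\lambda_j(u) := \mu_j(1/u)$ for $|u| > M := 1/\varepsilon$ produces holomorphic eigenvalues of $B(u)$ with $\lambda_j(u)\to\lambda_j$ as $u\to\infty$.

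\textbf{Step 2: the remaining eigenvalue.} The trace identity $\operatorname{tr}B(u) = u + \operatorname{tr}A$ identifies it. It equals $u + \operatorname{tr}A - \sum_j \lambda_j(u)$, and the sum is bounded for $|u|>M$, so the remaining eigenvalue is $u+O(1)$. It is also holomorphic. For large $u$ it differs from all the $\lambda_j(u)$, so it is genuinely the $n$-th eigenvalue and the eigenvalues are correctly counted with multiplicity.

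\textbf{Main obstacle.} The only delicate point is the bookkeeping that the $n-1$ bounded branches together with the large one account for all $n$ roots, counted with multiplicity. The trace argument settles this. If more care is wanted, one can instead check that the leading coefficient of $G$ in $\lambda$ is $-w$, so exactly one root escapes to infinity as $w\to 0$. Everything else is a routine application of the implicit function theorem.
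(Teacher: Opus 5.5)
Your proof is correct and follows the paper's argument: both expand the characteristic polynomial along the last row, substitute $w=1/u$ and rescale by $w$, then apply the holomorphic implicit function theorem at the simple roots of $\det(\lambda I - A_{n-1})$. The only difference is that you identify the remaining eigenvalue via the trace identity, which also gives its holomorphy, whereas the paper uses the Gershgorin circle theorem for that step.
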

\begin{proof}
Expanding \(\det(\lambda I-B(u))\) along the last row, we obtain:
\[ \det(\lambda I - B(u)) = (\lambda - u - a_{nn})\displaystyle\prod_{i = 1}^{n-1}(\lambda-\lambda_i) + R(\lambda).\]
Set $w = u^{-1}$ and define 
\[ p(\lambda,w) = w\det(\lambda I -B(u)) = (\lambda w -1-wa_{nn})\displaystyle\prod_{i = 1}^{n-1}(\lambda-\lambda_i) + wR(\lambda),\]
For convenience, we extend the range of $w$ to $\mathbb C$. For $w \neq 0$, $p(\lambda,w) = 0$ if and only if $\lambda$ is an eigenvalue of $B(w^{-1})$. Since $\lambda_1,...,\lambda_{n-1}$ are distinct,
\[ \frac{\partial p}{\partial \lambda}(\lambda_j,0) \neq 0,\quad \forall \,j = 1,...,n-1.\]
By the holomorphic implicit function theorem, for any $j = 1,...,n-1$, there exists a holomorphic function $\rho_j(w)$ near $w = 0$ such that 
\[ p(\rho_j(w),w) = 0, \quad \rho_j(0) = \lambda_j.\]
Therefore, the function $\lambda_j(u)=\rho_j(u^{-1})$ satisfies the desired properties. The last conclusion follows directly from the Gershgorin circle theorem.

\end{proof}
The proof of the next lemma can be found in \cite{faust2026generic}.
\begin{lem}[\cite{faust2026generic}, Lemma 2.3]\label{supp}
Suppose that $A(z) \in M_n(R)$. For any $V  = (v_1,...,v_n) \in \mathbb{C}^n$, define $B(z,V) = \mathrm{diag}(v_1,...,v_n) + A(z)$ and $P(z,V,\lambda) = \det(\lambda I -B(z,V))$. There exists a finite set $\mathcal{S} \subset \mathbb{Z}^d$ such that if $P(z,V^0,\lambda)$ is reducible for some $V^0 \in \mathbb{C}^n$ and  
\[F(z,\lambda) = \lambda^m + \displaystyle\sum_{k = 0}^{m-1}f_k(z)\lambda^k\]
is a monic factor, then $\mathrm{supp}(f_k) \subset \mathcal{S} $.
\end{lem}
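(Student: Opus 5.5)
The plan is to control the supports through Newton polytopes, using Ostrowski's theorem: for nonzero Laurent polynomials $F,G$ in the variables $(z,\lambda)$ we have $\mathrm{Newt}(FG)=\mathrm{Newt}(F)+\mathrm{Newt}(G)$ (Minkowski sum). The main task is to get a bound that is uniform in $V^0$. Monicity will anchor the Newton polytope of the factor, and a single $V$-independent polytope will bound every $\mathrm{Newt}(P(z,V^0,\lambda))$.

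\emph{Step 1: a uniform polytope.} Expanding $\det(\lambda I-\mathrm{diag}(v_1,\dots,v_n)-A(z))$ gives a polynomial in $(v,\lambda)$ with coefficients in $R$. Hence $P$ is a finite sum of terms $c\,z^\alpha\lambda^k v^\gamma$. Let $T\subset\mathbb Z^d\times\{0,\dots,n\}$ be the finite set of exponent pairs $(\alpha,k)$ that occur. For every $V^0\in\mathbb C^n$ the support of $P(z,V^0,\lambda)$, viewed as a Laurent polynomial in $(z,\lambda)$, lies in $T$, because specializing $V$ can only cancel terms. Therefore
\[
\mathrm{Newt}\bigl(P(\cdot,V^0,\cdot)\bigr)\subset K:=\mathrm{conv}(T)\qquad\text{for all } V^0 .
\]

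\emph{Step 2: anchoring the factor.} Suppose $P(z,V^0,\lambda)=F(z,\lambda)G(z,\lambda)$ with $F$ monic of degree $m$ in $\lambda$. Then $G$ is monic of degree $n-m$, since $P$ is monic of degree $n$. In particular the exponent $(0,n-m)$ lies in the support of $G$, so $(0,n-m)\in\mathrm{Newt}(G)$. By Ostrowski,
\[
\mathrm{Newt}(F)+(0,n-m)\subset \mathrm{Newt}(F)+\mathrm{Newt}(G)=\mathrm{Newt}(P(\cdot,V^0,\cdot))\subset K .
\]
It follows that every exponent $(\beta,k)$ in the support of $F$ lies in $K-(0,n-m)$.

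\emph{Step 3: defining $\mathcal S$.} Set
\[
\mathcal S:=\{\beta\in\mathbb Z^d:\ (\beta,k)\in K-(0,j)\ \text{for some } 0\le j\le n,\ k\in\mathbb Z\}.
\]
This set is finite, since it is the set of lattice points in finitely many projections of the compact set $K$. Step 2 gives $\mathrm{supp}(f_k)\subset\mathcal S$ for every $k$, and $\mathcal S$ depends only on $A$. Beyond verifying that Ostrowski's theorem applies to Laurent polynomials, which follows by multiplying by monomials, I expect no real obstacle.
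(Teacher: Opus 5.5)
Your proof is correct and complete. This paper does not prove the lemma; it imports it from \cite{faust2026generic}. So there is no in-paper argument to compare with, and your Newton-polytope argument via Ostrowski's theorem works as a self-contained replacement.

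The argument rests on the two points you isolate.
\begin{itemize}
\item Specializing $V\mapsto V^0$ can only cancel monomials $z^\alpha\lambda^k$, never create them. Hence one $V$-independent polytope $K$ contains every $\mathrm{Newt}(P(\cdot,V^0,\cdot))$.
\item Monicity of the cofactor $G$ places the point $(0,n-m)$, whose $z$-component is zero, in $\mathrm{Newt}(G)$. This anchor is essential: without it, the unit ambiguity $F\mapsto cz^{\mu}F$ (with $G\mapsto c^{-1}z^{-\mu}G$) would make any uniform support bound false.
\end{itemize}

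You only need the nontrivial inclusion $\mathrm{Newt}(F)+\mathrm{Newt}(G)\subset\mathrm{Newt}(FG)$. It follows from the standard vertex argument: a vertex of a Minkowski sum has a unique decomposition $p+q$, so its coefficient in $FG$ is the product of two nonzero coefficients, which is nonzero since $\mathbb{C}$ is a field.

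Two cosmetic remarks.
\begin{itemize}
\item The shifts by $(0,j)$ in your Step 3 do nothing. You can simply take $\mathcal S=\pi(K)\cap\mathbb Z^d$, where $\pi$ forgets the $\lambda$-coordinate.
\item Your $\mathcal S$ works simultaneously for factors of every degree $m$. That is exactly what the paper needs: linear factors in Lemma \ref{closed} and Theorem \ref{inherit}, and factors of arbitrary degree in Lemma \ref{holomorphic}.
\end{itemize}
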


The following lemma is very similar to Proposition 2.4 in \cite{faust2026generic}.
\begin{lem}\label{closed}
The set 
\[ \widetilde{\mathcal{R}} = \{V \in \mathbb{C}^N \, | \, D(z,V,\lambda)\, \mbox{splits over}\, R \}\]
is Zariski closed,
\end{lem}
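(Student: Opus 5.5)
Using Lemma \ref{supp}, I will express the splitting condition as a system of polynomial equations in $V$ together with finitely many auxiliary coefficients, and then eliminate the auxiliary variables.

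Let $\mathcal{S}\subset\mathbb{Z}^d$ be the finite set given by Lemma \ref{supp}, applied with $A(z)=L(z,0)$. Suppose $D(z,V,\lambda)$ splits over $R$. Then every linear monic factor $\lambda-\mu_j(z)$ satisfies $\mathrm{supp}(\mu_j)\subset\mathcal{S}$. Introduce unknowns $c=(c_{j,\beta})_{1\le j\le N,\ \beta\in\mathcal{S}}\in\mathbb{C}^{N|\mathcal{S}|}$ and set $\mu_j(z)=\sum_{\beta\in\mathcal{S}}c_{j,\beta}z^\beta$. Splitting at $V$ is then equivalent to the existence of $c$ with
\[
D(z,V,\lambda)=\prod_{j=1}^N\bigl(\lambda-\mu_j(z)\bigr)
\]
in $R[\lambda]$. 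Compare the coefficients $[z^\gamma\lambda^k]$ on both sides. Only finitely many $\gamma$ occur, namely those in the supports of the coefficients of $D$ together with the $k$-fold sums of elements of $\mathcal{S}$. This yields finitely many polynomial equations $G_s(V,c)=0$, where each $G_s\in\mathbb{C}[V,c]$. Hence $\widetilde{\mathcal{R}}=\pi(Z)$, where
\[
Z=\{(V,c): G_s(V,c)=0\ \forall s\}\subset\mathbb{C}^N\times\mathbb{C}^{N|\mathcal{S}|}
\]
and $\pi$ is the projection onto $\mathbb{C}^N$.

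A projection of a Zariski closed set is in general only constructible, so closedness needs a separate argument. My plan is to show that $\pi|_Z$ is proper; then Chevalley's theorem combined with properness (or simply closedness of proper images in the Euclidean topology, which together with constructibility gives Zariski closedness) finishes the proof.

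For properness, the elementary symmetric functions of the $\mu_j(z)$ are the coefficients of $D(z,V,\lambda)$ in $\lambda$, which are Laurent polynomials whose coefficients are polynomial in $V$. For fixed $z$ on the torus $|z_i|=1$, each root $\mu_j(z)$ of a monic polynomial is bounded by $1$ plus the maximum absolute value of the coefficients, which is bounded in terms of $|V|$. Since the $\mu_j$ are Laurent polynomials with support in $\mathcal{S}$, Parseval on $\mathbb{T}^d$ gives
\[
\sum_\beta|c_{j,\beta}|^2=\int_{\mathbb{T}^d}|\mu_j|^2,
\]
so $|c|$ is bounded on $\pi^{-1}(K)\cap Z$ for every bounded $K$. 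Thus $\pi|_Z$ is proper, its image is closed in the Euclidean topology, and a constructible set that is Euclidean closed is Zariski closed.

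An alternative route is to follow the proof of Proposition 2.4 in \cite{faust2026generic} directly and replace "some monic factor" by "linear monic factors". I expect the only delicate point to be the passage from a projection to a Zariski closed set, that is, the properness bound above. Everything else is a routine rewriting of the splitting condition as finitely many polynomial identities.
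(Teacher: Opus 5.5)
Your proof is correct, but it gets closedness by a different mechanism than the paper.

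The paper also starts from Lemma \ref{supp}, but it parametrizes linear factors projectively. It sets $X=\mathbb{C}\lambda\oplus\operatorname{span}_{\mathbb{C}}\{z^\alpha \mid \alpha\in\mathcal{S}\}$ and considers the multiplication morphism $\mu:(\mathbb{P}(X))^N\to\mathbb{P}(Y)$. The image of $\mu$ is closed because the source is projective, and the paper writes $\widetilde{\mathcal{R}}=\rho^{-1}(\operatorname{im}\mu)$ with $\rho(V)=[D(z,V,\lambda)]$. The only extra check is that $\deg_\lambda D=N$ forces every factor in a preimage to have a nonzero $\lambda$-coefficient. So the ``points at infinity'' of the parametrization never map to $[D]$.

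You instead use the affine, monic parametrization. There $\widetilde{\mathcal{R}}=\pi(Z)$ is a priori only constructible, and you supply the missing compactness analytically:
\begin{enumerate}
\item The Cauchy root bound on $\mathbb{T}^d$ together with Parseval keeps the coefficients $c$ bounded over bounded sets of $V$, so $\pi(Z)$ is Euclidean closed.
\item Chevalley's theorem plus the fact that Euclidean and Zariski closures agree for constructible sets then gives Zariski closedness.
\end{enumerate}
State it in the parenthetical form (constructible and Euclidean closed implies Zariski closed). That version needs no comparison between analytic and algebraic properness.

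The paper's route is shorter and purely algebraic: it uses only the main theorem of elimination theory, with no estimates and no comparison of topologies. Yours makes explicit why the factors cannot degenerate as $V$ varies, which is exactly the job the projective compactification does in the paper. The cost is two extra standard but nontrivial facts and a reliance on the Euclidean structure of $\mathbb{C}$.
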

\begin{proof}
    By Lemma $\ref{supp}$, there exists a finite set $\mathcal{S}\subset\mathbb Z^d$ such that if $h(z) \in R$ and $\lambda-h(z)$ divides $D(z,V,\lambda)$, then $\operatorname{supp}(h) \subset \mathcal{S}$. Set
\[
X=\mathbb C\lambda\oplus
\operatorname{span}_{\mathbb C}\{z^\alpha \,| \, \alpha\in \mathcal{S}\} \subset R[\lambda],
\]
and choose a finite-dimensional subspace $Y\subset R[\lambda]$ such that $D(z,V,\lambda)\in Y$ for every $V\in\mathbb C^N$ and $F_1\cdots F_N\in Y$ for every $F_1,\ldots,F_N\in X$.

Multiplication induces a morphism
\[
\mu:(\mathbb P(X))^N\longrightarrow\mathbb P(Y),
\qquad
([F_1],\ldots,[F_N])\longmapsto[F_1\cdots F_N].
\]
Its image is Zariski closed because its domain is projective. Moreover,
\[
\rho:\mathbb C^N\longrightarrow\mathbb P(Y),
\qquad V\longmapsto[D(z,V,\lambda)]
\]
is a regular map, since the coefficients of $D$ depend polynomially on $V$ and its leading coefficient in $\lambda$ is $1$.

We claim that
\[
\widetilde{\mathcal{R}}=\rho^{-1}(\operatorname{im}\mu).
\]
One inclusion follows from Lemma $\ref{supp}$. Conversely, if $[D]\in\operatorname{im}\mu$, then $\deg_\lambda D=N$ forces every factor $F_j\in X$ to have a nonzero coefficient of $\lambda$. Normalizing these factors to be monic yields a complete splitting of $D$ over $R$. Thus $\widetilde{\mathcal{R}}$ is Zariski closed.
\end{proof}
\section{Monodromy group}\label{mono}
The aim of this section and the next section is to reduce the general case to the splitting case, i.e., we will prove that if $D(z,V,\lambda)$ is reducible for every $V \in \mathbb{C}^N$, then $D(z,V,\lambda)$ splits over $R$ for every $V \in \mathbb{C}^N$. 

In this and the next section, we assume that $z^0 \in (\mathbb{C}^*)^d$ satisfies $l_{ij}(z^0)\neq 0$ for every distinct $i,j \in\{1,...,N\}$ with $l_{ij}(z)\not\equiv 0$. Such points are generic in $(\mathbb{C}^*)^d$. We denote $D(z^0,V,\lambda)$ by $\Phi(V,\lambda)$ and $\varphi_{z^0}$ by $\varphi$. We will prove that the connectivity of the quotient graph implies the irreducibility of \(\Phi\), which in turn imposes restrictions on \(\operatorname{Im}(\varphi)\).
\begin{lem}\label{irre}
Regard $y_1,\ldots,y_N$ as variables, and define the matrix $H(y_1,...,y_N) = (h_{ij})_{1\leq i,j\leq N}$ by
\[ h_{ij} = \left\{\begin{matrix}
y_i \quad \quad\mbox{if} \, \, i= j\\ 
-l_{ij}(z^0) \quad \quad\mbox{if} \, \, i\neq j
\end{matrix}\right.\]
Set $\Psi(y_1,...,y_N) = \det(H(y_1,...,y_N))$. Then $\Psi$ is irreducible in $\mathbb{C}[y_1,...,y_N]$.
\end{lem}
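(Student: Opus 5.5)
The plan is to use the multilinear structure of $\Psi$ together with the connectivity of the quotient graph. Expanding the determinant, $\Psi$ has degree at most $1$ in each variable $y_i$. The monomial $y_1\cdots y_N$ comes only from the diagonal product, so it has coefficient $1$. For any subset $K\subset\{1,\dots,N\}$, the coefficient of $\prod_{k\notin K} y_k$ in $\Psi$ equals the determinant of the principal submatrix of $H(0,\dots,0)$ indexed by $K$. In particular:
\begin{itemize}
\item for $K=\{i\}$ this coefficient is $h_{ii}|_{y=0}=0$;
\item for $K=\{i,j\}$ with $i\neq j$ this coefficient is $-l_{ij}(z^0)l_{ji}(z^0)$.
\end{itemize}
By the standing choice of $z^0$ and the symmetry $l_{ji}(z)=l_{ij}(z^{-1})$, the quantity $-l_{ij}(z^0)l_{ji}(z^0)$ is nonzero exactly when $u_i$ and $u_j$ are adjacent in the quotient graph.

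Suppose $\Psi=PQ$ with $P,Q$ nonconstant. Since $\deg_{y_i}\Psi=1$ for every $i$ (the monomial $y_1\cdots y_N$ is present), each $y_i$ occurs in exactly one of $P,Q$, and there to degree $1$. This gives a partition $\{1,\dots,N\}=I\sqcup J$ such that $P$ is multilinear in $y_I$ only and $Q$ is multilinear in $y_J$ only. Both $I$ and $J$ are nonempty, because $P$ and $Q$ are nonconstant and every monomial of $P$ or $Q$ is a product of variables from its block. Comparing top coefficients gives $[\prod_{k\in I}y_k]P\cdot[\prod_{k\in J}y_k]Q=1$, so both of these coefficients are nonzero.

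Next fix $i\in I$. The monomial $\prod_{k\ne i}y_k$ factors uniquely as $\prod_{I\setminus\{i\}}y_k\cdot\prod_J y_k$, so its coefficient in $PQ$ is $[\prod_{I\setminus\{i\}}y_k]P\cdot[\prod_J y_k]Q$. By the first paragraph (case $K=\{i\}$) this coefficient is $0$. Since $[\prod_J y_k]Q\neq0$, we get $[\prod_{I\setminus\{i\}}y_k]P=0$ for every $i\in I$. Symmetrically, $[\prod_{J\setminus\{j\}}y_k]Q=0$ for every $j\in J$.

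Finally, by connectivity of the quotient graph there exist $i\in I$ and $j\in J$ with $u_i$ adjacent to $u_j$. The coefficient of $\prod_{k\ne i,j}y_k$ in $PQ$ is $[\prod_{I\setminus\{i\}}y_k]P\cdot[\prod_{J\setminus\{j\}}y_k]Q=0$. In $\Psi$, however, this coefficient is $-l_{ij}(z^0)l_{ji}(z^0)\neq0$, which is a contradiction. Hence $\Psi$ is irreducible.

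The only delicate step is the bookkeeping of which monomials of $PQ$ can produce a given monomial of $\Psi$. This is settled by the observation that $P$ and $Q$ involve disjoint sets of variables, so every monomial factors uniquely.
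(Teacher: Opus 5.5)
Your proof is correct and follows the paper's argument essentially step for step. You split the variables between the two factors using multilinearity and the coefficient $1$ of $y_1\cdots y_N$, and the vanishing coefficients of $\prod_{k\neq i}y_k$ force the corresponding coefficients of each factor to vanish. An edge crossing the partition then contradicts $[\prod_{k\neq i,j}y_k]\Psi=-l_{ij}(z^0)l_{ji}(z^0)\neq 0$; your principal-minor description of the coefficients and the unique-factorization remark only spell out what the paper leaves implicit.
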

\begin{proof}
Assume, by contradiction, that $\Psi$ is reducible. There exist $f,g \in \mathbb{C}[y_1,..,y_N]$ such that 
\[ \Psi = fg.\]
As $\deg_{y_i}(\Psi) = 1$, each $y_i$  appears in exactly one of the factors $f$ and $g$. Moreover, we have
\[[y_1\cdots y_N]\Psi = 1,\]
This gives a partition
\[ \{1,2,...,N\} = I\sqcup J, \quad I,J \neq \varnothing \]
such that $f \in \mathbb{C}[y_i, i\in I]$, $g \in \mathbb{C}[y_j,j\in J]$. Moreover, after multiplying $f$ and $g$ by suitable constants, we may assume that
\begin{align}\label{nozero}
[\displaystyle\prod_{k\in I}^{}y_k]f = 1,\quad [\displaystyle\prod_{k\in J}^{}y_k]g = 1    
\end{align}

Since the quotient graph is connected, there exist $i \in I$ and $j \in J$ such that $l_{ij}(z^0) \neq 0$, thus 
\begin{align}\label{nozero2}
 [\displaystyle\prod_{k \neq i,j}^{}y_k ]\Psi =-l_{ij}(z^0)l_{ji}(z^0) \neq 0.   
\end{align}
However, we have 
\[ [\displaystyle\prod_{k \neq i}^{}y_k]\Psi = [\displaystyle\prod_{k \neq j}^{}y_k]\Psi =0.\]
Combined with $\eqref{nozero}$, we deduce that 
\[[\displaystyle\prod_{k \in I,k \neq i}^{}y_k]f = 0, \quad [\displaystyle\prod_{k \in J,k \neq j}^{}y_k]g = 0,\]
which contradicts $\eqref{nozero2}$.
\end{proof}
The Lemma $\ref{irre}$ implies that $\Phi$ is irreducible in $\mathbb{C}[v_1,...,v_N][\lambda]$. Indeed, if $\Phi$ were reducible in $\mathbb{C}[v_1,\ldots,v_N][\lambda]$, then the change of variables
\[
y_i=\lambda-v_i-l_{ii}(z^0) ,\quad i = 1,...,N
\] 
would imply that $\Psi$ is reducible in $\mathbb{C}[y_1,...,y_N]$, which is a contradiction. Since $\Phi$ is monic, by Gauss Lemma, $\Phi$ is irreducible in $\mathbb{C}(v_1,...,v_N)[\lambda]$. Therefore, $\mathrm{Gal}(\Phi/\mathbb{C}(v_1,...,v_N))$ is a transitive subgroup of $S_N$.

The following theorem is related to the work in \cite{cohen1990galois}.
\begin{thm}\label{symm}
$\mathrm{Im}(\varphi) = S_N$.
\end{thm}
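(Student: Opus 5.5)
We already know from Lemma \ref{irre} and the remark after it that $\mathrm{Im}(\varphi)$ is a transitive subgroup of $S_N$. The classical criterion we will use is this: a transitive permutation group that is also doubly transitive and contains a transposition is the full symmetric group. (Indeed, double transitivity conjugates one transposition to all transpositions.) An alternative route is to show the group is primitive and contains a transposition. We therefore plan to produce a transposition and double transitivity directly by analytic continuation in the potential variables, using Lemma \ref{converge} as the main tool.

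\textbf{Step 1: double transitivity via an induction on $N$.} Fix the vertex $u_N$; we may choose it so that deleting it leaves the quotient graph connected, for example a leaf of a spanning tree. Consider potentials $V=(V',u)$ with $V'$ generic in $\mathbb{C}^{N-1}$ and $|u|$ large. By Lemma \ref{converge}, for $|u|>M$ the roots of $\Phi(V',u,\lambda)$ split into two kinds:
\begin{itemize}
\item $N-1$ branches close to the roots of the principal submatrix $\Phi_{N-1}(V',\lambda)$, obtained by deleting row and column $N$;
\item one branch of the form $u+O(1)$.
\end{itemize}
Loops in $V'$ with $u$ fixed large therefore realize, on the first $N-1$ branches, the monodromy of $\Phi_{N-1}$. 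We must check that these loops avoid the discriminant locus, which holds uniformly for $|u|$ large by the implicit function theorem in Lemma \ref{converge}. The large branch is fixed by all these loops. By induction on $N$ (the submatrix corresponds to a connected quotient graph, and $z^0$ remains admissible), the monodromy of $\Phi_{N-1}$ is $S_{N-1}$. Hence the stabilizer of a point in $\mathrm{Im}(\varphi)$ contains $S_{N-1}$. Combined with transitivity, this gives $\mathrm{Im}(\varphi)=S_N$ immediately: $|\mathrm{Im}(\varphi)|\geq N\cdot (N-1)!$.

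The base case $N=2$ is just transitivity, which follows from the irreducibility of $\Phi$. The same argument works with the base point taken anywhere in $\mathcal{U}$, since the monodromy group changes only by conjugation.

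\textbf{Main obstacle.} The delicate point is matching base points and branches. Let $\widetilde V=(V',u)$ with $|u|$ large and $V'$ generic, so that both $\Delta(z^0,\widetilde V)\neq 0$ and $\Phi_{N-1}(V',\cdot)$ has simple roots. We must verify that the loop-induced permutations of the $N-1$ small branches of $\Phi$ coincide with those of $\Phi_{N-1}$. The plan is to take a loop $\gamma'$ in the complement of the discriminant of $\Phi_{N-1}$ and reuse Lemma \ref{converge}, but with $M$ chosen uniformly along the compact set $\gamma'([0,1])$. The implicit function theorem applied to $p(\lambda,w;V')$ is jointly holomorphic in $(w,V')$, so compactness gives a uniform radius in $w$. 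The branches $\rho_j(w,V')$ then continue along $\gamma'$ and permute exactly as the roots of $\Phi_{N-1}$ do. A Gershgorin bound keeps the large eigenvalue separated from them, so the discriminant of $\Phi$ is nonzero along the loop $(\gamma'(t),u)$.

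If the induction hypothesis on connectivity caused trouble, the fallback is to produce a transposition directly. We would do this by making two potentials $v_i,v_j$ of an adjacent pair large and equal up to $O(1)$, reducing to the $2\times 2$ block where Theorem \ref{simple}-type computations show that the two eigenvalues are exchanged around a simple branch point. That transposition, together with the primitivity coming from the stabilizer argument, would also give $S_N$.
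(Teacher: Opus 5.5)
Your proposal is correct and follows essentially the same route as the paper: induction on $N$ after deleting a vertex whose removal keeps the quotient graph connected, then sending that vertex's potential to infinity via Lemma \ref{converge} so that loops $(\gamma'(t),u)$ realize $S_{N-1}$ inside the stabilizer of the large branch, and finally combining this with the transitivity from Lemma \ref{irre} to get $|\mathrm{Im}(\varphi)|\geq N\cdot(N-1)! = N!$. The double-transitivity/transposition framing and the fallback are unnecessary, though your remark about choosing $M$ uniformly along the compact loop spells out a step the paper only sketches.
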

\begin{proof}
We argue by induction on $N$. The conclusion is obvious when $N=1$. Assume that the conclusion holds for $N-1$. Relabel the vertices, if necessary, such that the subgraph of the quotient graph induced by the first $N-1$ vertices is also connected. We set 
\[ L(z,V) =  \begin{pmatrix}
 L_{N-1}(z,V')& \alpha(z)\\ 
 \beta^T(z)& l_{NN}(z) +v_N
\end{pmatrix}.\]
Then for any $\sigma \in S_{N-1}$, there exists a neighborhood $\Omega$ of $z^0$ and a loop $\gamma$ in $\mathbb{C}^{N-1}$ such that for $z \in \Omega$, $V' = (v_1,..,v_{N-1}) \in \gamma$, $L_{N-1}(z,V')$ has only simple eigenvalues, and the permutation induced by $\gamma$ is $\sigma$. There exists $\varepsilon >0$ such that for any distinct $i,j \in \{1,...,N-1\}$,
\[|\lambda_i(z,V') - \lambda_j(z,V')| \geq \varepsilon,\quad \forall \,z\in \Omega, V' \in \gamma.\]
Consider the loop $\gamma_M(t) = (\gamma(t),M)$ in $\mathbb{C}^N$. From the argument in Lemma $\ref{converge}$, when $|M|$ is sufficiently large, for any $V = (V',M) \in \gamma_M$ and $z \in \Omega$, $L(z,V)$ has $N-1$ simple eigenvalues $\lambda_i(V)$, $i = 1,2,..,N-1$ that satisfy
\[|\lambda_i(z,V)-\lambda_j(z,V)| \geq \frac{\varepsilon}{2}, \quad |\lambda_i(V)| \leq \frac{|M|}{2}, \]
and an eigenvalue $\lambda_N(z,V)$ with 
\[|\lambda_N(z,V)| > \frac{|M|}{2}.\]
Consequently, the permutation induced by $\gamma_M$ must be 
\[ i \mapsto \sigma(i) ,\quad 1\leq i\leq N-1,\quad N\mapsto  N.\]
Therefore, the stabilizer subgroup 
\[ \operatorname{Stab}(N) = \{\eta \in \operatorname{Im}(\varphi)| \, \eta(N) = N \}\]
is isomorphic to $S_{N-1}$. Moreover, from Lemma $\ref{irre}$, $\operatorname{Gal}(\Phi/\mathbb{C}(v_1,..,v_N))$ (hence $\operatorname{Im}(\varphi)$) is a transitive subgroup of $S_N$, thus
\[ |\operatorname{Im}(\varphi)| \geq N|S_{N-1}| = N!.\]
We deduce that $\operatorname{Im}(\varphi) = S_N$.
\end{proof}
\section{Reduction theorem}\label{redutosplit}
In this section, we assume that $D(z,V,\lambda)$ is reducible for every $V \in \mathbb{C}^N$. Define
\[ \mathcal{U} = \{V \in \mathbb{C}^N\, | \, \Delta(z^0,V) \neq 0 \}\]
 and set $V^0 \in \mathcal{U}$. Then there exists a sufficiently small neighborhood $\Omega \times U$ of $(z^0,V^0)$, together with jointly holomorphic functions
 \[
\lambda_1(z,V),\ldots,\lambda_N(z,V)\in\mathcal O(\Omega\times U),
\]
such that for every $(z,V)\in\Omega\times U$, $\lambda_1(z,V),\ldots,\lambda_N(z,V)$ are precisely the roots of $D(z,V,\lambda)$. Moreover, for any distinct $i,j \in\{1,...,N\}$ and $(z,V) \in \Omega \times U$, we have
\begin{align}\label{diff}
 \lambda_i(z,V) \neq \lambda_j(z,V).   
\end{align}
\begin{lem}\label{holomorphic}
There exists $\mathcal{J} \subsetneq  \{1,...,N\}$ such that 
\[ F(z,V,\lambda) = \displaystyle\prod_{j\in \mathcal{J}}^{}(\lambda-\lambda_j(z,V))\]
is a factor of $D(z,V,\lambda)$ for every $V \in U$.
\end{lem}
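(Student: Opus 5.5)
For each $V\in U$, the hypothesis gives a proper monic factor of $D(z,V,\lambda)$ in $R[\lambda]$. Since $D$ is monic in $\lambda$, Gauss's lemma lets us take this factor monic with coefficients in $R$. Its roots near $z^0$ are then a subset of $\{\lambda_1(z,V),\ldots,\lambda_N(z,V)\}$, indexed by some nonempty proper set $\mathcal{J}(V)\subsetneq\{1,\ldots,N\}$. Indeed, for $z\in\Omega$ the factor vanishes exactly at some of the $N$ distinct roots, and by \eqref{diff} and continuity in $z$ the set of indexed branches it picks up is constant on the connected set $\Omega$ (shrinking $\Omega$ to a ball if necessary). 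There are only finitely many, namely $2^N-2$, possible subsets. So $U=\bigcup_{\mathcal{J}}U_{\mathcal{J}}$, where
\[U_{\mathcal{J}}=\Big\{V\in U:\ \prod_{j\in\mathcal{J}}(\lambda-\lambda_j(z,V))\in R[\lambda]\Big\}.\]

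Next I make each $U_{\mathcal{J}}$ into a closed analytic condition, using Lemma~\ref{supp}. Write $\prod_{j\in\mathcal{J}}(\lambda-\lambda_j(z,V))=\lambda^m+\sum_k g_k(z,V)\lambda^k$ with $g_k\in\mathcal{O}(\Omega\times U)$, these being elementary symmetric functions of the branches. Lemma~\ref{supp} says that whenever this product lies in $R[\lambda]$, each $g_k(\cdot,V)$ is a Laurent polynomial supported in the fixed finite set $\mathcal S$. Conversely, $g_k(\cdot,V)$ agrees on $\Omega$ with a Laurent polynomial supported in $\mathcal S$ if and only if it lies in the finite-dimensional space $\operatorname{span}\{z^\alpha:\alpha\in\mathcal S\}$, as functions on $\Omega$. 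This is equivalent to the vanishing of certain determinants built from $g_k(z^{(1)},V),\ldots,g_k(z^{(s+1)},V)$ and $z^{(i)\alpha}$, for finitely many generic sample points $z^{(i)}\in\Omega$. More cleanly, fix $s=|\mathcal S|$ points $z^{(1)},\ldots,z^{(s)}$ where the monomial matrix is invertible. Solve for the would-be coefficients $c_\alpha(V)$, which are holomorphic in $V$, and impose $g_k(z,V)=\sum_\alpha c_\alpha(V)z^\alpha$ for all $z\in\Omega$. This is an intersection of zero sets of holomorphic functions of $V$. Hence each $U_{\mathcal{J}}$ is an analytic subset of $U$.

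$U$ is a ball, and it is a finite union of the analytic subsets $U_{\mathcal{J}}$. Proper analytic subsets of a connected open set are nowhere dense, and a finite union of nowhere dense closed sets is nowhere dense by Baire's theorem. Therefore some $U_{\mathcal{J}}$ has nonempty interior, and by the identity theorem it equals all of $U$. This gives the required $\mathcal{J}$, with $\mathcal{J}\ne\varnothing$ and $\mathcal{J}\subsetneq\{1,\ldots,N\}$.

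The main obstacle is the second step: justifying that membership in $R[\lambda]$ is an analytic condition on $V$. This rests on the uniform support bound from Lemma~\ref{supp}; without it the condition is an infinite-dimensional one with no a priori closedness. A lesser point is that the factor at $V$ can be taken monic in $R[\lambda]$. This holds because a factorization in $R[\lambda]$ of a monic polynomial has leading coefficients that are units, namely monomials times constants, which can be normalized away.
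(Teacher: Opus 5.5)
Your proof is correct and follows essentially the same route as the paper. You sort each $V$ by the index set $\mathcal{J}$ of a monic factor, use the uniform support bound of Lemma~\ref{supp} with interpolation at $s$ sample points to make ``the coefficients are Laurent polynomials'' a vanishing condition on holomorphic functions, and conclude that one of finitely many analytic sets covering the domain is everything; the only differences are that the paper places its sets $X_{\mathcal{J}}$ in $\Omega\times U$ rather than in $U$, and that you spell out the nowhere-density argument the paper leaves implicit.
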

\begin{proof}
Fix an arbitrary $V^1 \in U$. By $\eqref{diff}$, any monic proper factor of $D(z,V^1,\lambda)$ can be written as
\begin{align}\label{factor}
 F_\mathcal{J}(z,V^1,\lambda) = \displaystyle\prod_{j\in \mathcal{J}}^{}(\lambda-\lambda_j(z,V^1))   
\end{align}
for some $\mathcal{J} \subsetneq \{1,...,N\}$. For convenience, we assume that $\mathcal{J} = \{1,...,m\}$. Set
\begin{align}\label{symmcoe}
  f_{\mathcal{J},k}(z,V) = (-1)^{m-k}e_{m-k}(\lambda_1(z,V),...,\lambda_m(z,V)),  
\end{align}
where $e_i$ is the $i$-th elementary symmetric polynomial on $\mathbb{C}^{m}$. Then $f_{\mathcal{J},k}$ is jointly holomorphic in $(z,V)$, and
\[ F_{\mathcal{J}}(z,V,\lambda) = \displaystyle\prod_{j\in \mathcal{J}}^{}(\lambda-\lambda_j(z,V)) = \lambda^{m}+\sum_{k=0}^{m-1}f_{\mathcal{J},k}(z,V)\lambda^k. \]
By our assumption, $ f_{\mathcal{J},k}(z,V^1) \in R$, and Lemma $\ref{supp}$ implies 
\[\mathrm{supp}(f_{\mathcal{J},k}(z,V^1)) \subset \mathcal{S}.\]
Set
\[ c_{k,\alpha}(V^1) = [z^{\alpha}]f_{\mathcal{J},k}(z,V^1), \quad \alpha \in \mathcal{S}.\]
For simplicity of notation, we do not indicate the dependence of \(c_{k,\alpha}\) on \(\mathcal J\). Set $|\mathcal{S}| = s$ and choose $z^1,...,z^s \in \Omega$ such that the matrix 
\[M = ((z^i)^\alpha)_{\substack{1\leq i \leq s
 \\ \alpha \in \mathcal{S}}}\]
is invertible (this holds for generic $(z^1,...,z^s) \in ((\mathbb{C}^*)^d)^s$). Then we have 
\begin{align}\label{value}
 (c_{k,\alpha}(V^1))_{\alpha \in \mathcal{S}} = M^{-1}
\begin{pmatrix}
f_{\mathcal{J},k}(z^1,V^1)
\\\vdots 
\\f_{\mathcal{J},k}(z^s,V^1)
\end{pmatrix}.
\end{align} 
In $\eqref{value}$, we let $V^1$ vary over $U$. This gives a holomorphic function $c_{k,\alpha}(V)$ for every $\alpha \in \mathcal{S}$ and $k = 0,...,m-1$. Define 
\[ \widetilde{f}_{\mathcal{J},k}(z,V) = \displaystyle\sum_{\alpha\in \mathcal{S}}^{}c_{k,\alpha}(V)z^\alpha,\]
then 
\begin{align}\label{coincide}
 f_{\mathcal{J},k}(z,V^1) = \widetilde{f}_{\mathcal{J},k}(z,V^1)   
\end{align}
for $k < m$ and $z \in \Omega$.

For every proper subset $\mathcal{J}$ of $\{1,...,N\}$, define
\[ X_{\mathcal{J}} = \{(z,V) \in \Omega \times U \, | \, f_{\mathcal{J},k}(z,V) = \widetilde{f}_{\mathcal{J},k}(z,V) \,\, \mbox{for every} \,\, 0\leq k < |\mathcal{J}|\}.\]
From $\eqref{coincide}$, for any $V^1 \in U$, there exists $\mathcal{J} \subsetneq \{1,...,N\}$ such that $\Omega \times \{V^1\} \subset X_{\mathcal{J}}$. Hence
\[ \bigcup_{\mathcal{J} \subsetneq \{1,...,N\}, \mathcal{J} \neq \varnothing}X_{\mathcal{J}} = \Omega \times U\]
Since $X_{\mathcal{J}}$ are analytic sets, there exists $\mathcal{J}_0$ such that $X_{\mathcal{J}_0} = \Omega \times U$, which gives the conclusion.
\end{proof}
Next, we prove that the above factor, defined near $V^0$ with coefficients holomorphic in $V$, admits analytic continuation along the loop. Recall that for a loop $\gamma$ in $\mathcal{U}$ based at $V^0$, the functions
\[\lambda_j^{(q)}(z,V) \in \mathcal{O}(\Omega\times B_q), \quad q = 1,...,r\]
form the analytic continuation of $\lambda_j(z,V)$ along $\gamma$, as introduced in Subsection $\ref{analytic}$. We may further require that $B_1 \subset U$.
\begin{lem}\label{contin}
Suppose that for some $\mathcal{J} \subsetneq \{1,...,N\}$,
\[  F(z,V,\lambda) = \displaystyle\prod_{j\in \mathcal{J}}^{}(\lambda-\lambda_j(z,V))\]
is a factor of $D(z,V,\lambda)$ for any $V \in B_1$. Then 
\[ F^{(q)}(z,V,\lambda)= \displaystyle\prod_{j\in \mathcal{J}}^{}(\lambda-\lambda_j^{(q)}(z,V))\]
is a factor of $D(z,V,\lambda)$ for every $V \in B_q$.
\end{lem}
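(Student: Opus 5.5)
We argue by induction on $q$; the case $q=1$ is the hypothesis. Assume that $F^{(q)}(z,V,\lambda)$ divides $D(z,V,\lambda)$ in $R[\lambda]$ for every $V\in B_q$. We reuse the construction from the proof of Lemma \ref{holomorphic}.

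Write $m=|\mathcal J|$ and define the coefficients $f^{(q+1)}_{\mathcal J,k}(z,V)$ of $F^{(q+1)}$ through elementary symmetric polynomials of $\lambda^{(q+1)}_j$, $j\in\mathcal J$, as in \eqref{symmcoe}. These are holomorphic on $\Omega\times B_{q+1}$. Fix the points $z^1,\dots,z^s\in\Omega$ and the invertible matrix $M$ from the proof of Lemma \ref{holomorphic}, and define holomorphic functions $c^{(q+1)}_{k,\alpha}(V)$ on $B_{q+1}$ by formula \eqref{value} applied to $f^{(q+1)}_{\mathcal J,k}$. Then set
\[
\widetilde f^{(q+1)}_{\mathcal J,k}(z,V)=\sum_{\alpha\in\mathcal S}c^{(q+1)}_{k,\alpha}(V)z^\alpha .
\]
Both $f^{(q+1)}_{\mathcal J,k}$ and $\widetilde f^{(q+1)}_{\mathcal J,k}$ are holomorphic on the connected set $\Omega\times B_{q+1}$.

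Next we check that they agree on the overlap $\Omega\times(B_q\cap B_{q+1})$. By condition (3) of the continuation, $\lambda^{(q+1)}_j=\lambda^{(q)}_j$ there, so $f^{(q+1)}_{\mathcal J,k}=f^{(q)}_{\mathcal J,k}$ there. By the induction hypothesis and Lemma \ref{supp}, for each $V\in B_q$ the function $f^{(q)}_{\mathcal J,k}(\cdot,V)$ is a Laurent polynomial supported in $\mathcal S$. Evaluating at $z^1,\dots,z^s$ and inverting $M$ recovers its coefficients exactly. Hence $f^{(q+1)}_{\mathcal J,k}=\widetilde f^{(q+1)}_{\mathcal J,k}$ on $\Omega\times(B_q\cap B_{q+1})$. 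This overlap is a nonempty open set, since it contains $\Omega\times\{\gamma(t_q)\}$ together with a neighborhood of it.

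By the identity theorem on the connected set $\Omega\times B_{q+1}$ (each $B_q$ is a ball), the equality $f^{(q+1)}_{\mathcal J,k}=\widetilde f^{(q+1)}_{\mathcal J,k}$ holds on all of $\Omega\times B_{q+1}$. So for each $V\in B_{q+1}$, the restriction of $F^{(q+1)}(\cdot,V,\lambda)$ to $\Omega$ equals the polynomial
\[
G_V(z,\lambda)=\lambda^m+\sum_{k<m}\widetilde f^{(q+1)}_{\mathcal J,k}(z,V)\lambda^k\in R[\lambda].
\]

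It remains to show that $G_V$ divides $D(z,V,\lambda)$ in $R[\lambda]$. Since $G_V$ is monic, polynomial division in $R[\lambda]$ gives $D=G_VQ+T$ with $Q,T\in R[\lambda]$ and $\deg_\lambda T<m$. On $\Omega$, the $m$ roots of $G_V$ are the distinct numbers $\lambda^{(q+1)}_j(z,V)$, $j\in\mathcal J$, and each is a root of $D$. Therefore $T(z,\cdot)$, which has degree less than $m$, vanishes at $m$ distinct points, so $T(z,\cdot)\equiv 0$ for every $z\in\Omega$. Each coefficient of $T$ is then a Laurent polynomial vanishing on the open set $\Omega$, so $T=0$. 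This completes the induction step.

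The main obstacle is the identity-theorem step. It requires the overlap to be nonempty and open and the domain $\Omega\times B_{q+1}$ to be connected, both of which the choice of balls and neighborhoods guarantees. It also requires that the coefficient-extraction formula \eqref{value} use the same fixed $z^i$ throughout, so that the Laurent-polynomial structure transfers from $B_q$ to $B_{q+1}$.
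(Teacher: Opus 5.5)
Your proof is correct and follows essentially the paper's argument. You use the same coefficient-extraction functions $\widetilde f_k$, built from the fixed points $z^1,\dots,z^s$ and $M^{-1}$, and propagate them from ball to ball by the identity theorem on the overlaps $\Omega\times(B_q\cap B_{q+1})$. The one difference is the final step: you obtain divisibility by dividing $D$ by the monic $G_V$ and noting that the remainder vanishes at $m$ distinct roots for each $z\in\Omega$. The paper instead runs the same continuation on the complementary factor $\prod_{j\notin\mathcal J}(\lambda-\lambda_j)$ and concludes $D=F^{(q)}G^{(q)}$; both routes work.
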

\begin{proof}
We still assume that $\mathcal{J} = \{1,...,m\}$. Set 
\[ F^{(q)}(z,V,\lambda) = \lambda^{m}+\sum_{k=0}^{m-1}f_k^{(q)}(z,V)\lambda^k.\]
As in the proof of Lemma $\ref{holomorphic}$, we can construct $\widetilde{f}_k^{(q)}(z,V) \in \mathcal{O}(\Omega\times B_q)$ such that for any $V^1 \in B_q$, $f_k^{(q)}(z,V^1)$ is a Laurent polynomial supported in $\mathcal{S}$ if and only if 
\begin{align}\label{concide2}
  f_k^{(q)}(z,V^1) = \widetilde{f}_k^{(q)}(z,V^1), \quad \forall \, z \in \Omega,\, k = 0,...,m-1.  
\end{align}
However, $\eqref{concide2}$ holds for every $V^1 \in B_1\cap B_2$. Therefore, by the identity theorem, 
\[f_k^{(2)}(z,V) = \widetilde{f}_{k}^{(2)}(z,V),\quad \forall \, (z,V) \in \Omega \times B_2,\, k = 0,...,m-1.\]
 Using this argument repeatedly, we deduce that $f_k^{(q)}(z,V^1) \in R$ for any fixed $V^1 \in B_q$.

Similarly, on $\Omega \times B_1$, we set 
\[ G(z,V,\lambda) = \displaystyle\prod_{j = m+1}^{N}(\lambda-\lambda_j(z,V)).\]
Then for any $V^1 \in B_1$, $G(z,V^1,\lambda)$ is a factor of $D(z,V^1,\lambda)$ with 
\[D(z,V^1,\lambda) = F(z,V^1,\lambda)G(z,V^1,\lambda).\]
By the above argument, we have $G^{(q)}(z,V^1,\lambda) \in R[\lambda]$ for any $V^1 \in B_q$. Since $\lambda_1^{(q)}(z,V^1),...,\lambda_N^{(q)}(z,V^1)$ are the distinct roots of $D(z,V^1,\lambda)$, we deduce that
\[ D(z,V^1,\lambda) = F^{(q)}(z,V^1,\lambda)G^{(q)}(z,V^1,\lambda), \quad \forall \, V^1 \in B_q.\]
\end{proof}
\begin{thm}\label{main}
If $D(z,V,\lambda)$ is reducible for every $V \in \mathbb{C}^N$, then $D(z,V,\lambda)$ splits over $R$ for any $V \in \mathbb{C}^N$.
\end{thm}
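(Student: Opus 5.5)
Fix $z^0$ as in Section~\ref{mono} and a base point $V^0\in\mathcal U$ with neighborhood $\Omega\times U$ and holomorphic root branches $\lambda_1,\dots,\lambda_N$. By Lemma~\ref{holomorphic} there is a proper nonempty $\mathcal J\subsetneq\{1,\dots,N\}$ such that $F_{\mathcal J}(z,V,\lambda)=\prod_{j\in\mathcal J}(\lambda-\lambda_j(z,V))$ divides $D(z,V,\lambda)$ in $R[\lambda]$ for every $V\in U$. The plan is to move this factor around with monodromy, intersect the resulting factors to isolate each single root $\lambda-\lambda_j(z,V)$ as a factor in $R[\lambda]$ for $V$ in an open set, and then use Lemma~\ref{closed} to pass to all $V$.

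\emph{Transport step.} Let $\gamma$ be a loop in $\mathcal U$ based at $V^0$ with $\varphi(\gamma)=\sigma$. Applying Lemma~\ref{contin} along $\gamma$ (shrinking $B_1\subset U$), we get that $\prod_{j\in\mathcal J}(\lambda-\lambda^{(r)}_j(z,V))$ divides $D$ in $R[\lambda]$ for $V\in B_r$. On $\Omega\times(B_r\cap B_1)$ we have $\lambda_j^{(r)}=\lambda_{\sigma(j)}$, so $F_{\sigma(\mathcal J)}(z,V,\lambda)$ divides $D(z,V,\lambda)$ in $R[\lambda]$ for $V$ in the nonempty open set $B_r\cap B_1$. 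By the identity-theorem argument in the proof of Lemma~\ref{contin}, the coefficients of $F_{\sigma(\mathcal J)}(z,V,\lambda)$ agree with their $\mathcal S$-supported Laurent interpolants there, hence on all of $\Omega\times U$. Thus $F_{\sigma(\mathcal J)}(\cdot,V,\cdot)\in R[\lambda]$ for all $V\in U$. By Theorem~\ref{symm} every $\sigma\in S_N$ occurs, so every subset $\mathcal K$ with $|\mathcal K|=|\mathcal J|=m$ gives such a factor.

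\emph{Isolation step.} Fix $V\in U$. For a fixed $j$, the intersection of all $m$-subsets containing $j$ is $\{j\}$, since $1\le m\le N-1$. Because the roots $\lambda_i(z,V)$ are pairwise distinct on $\Omega$, we have, near $z^0$,
\[\gcd_{R[\lambda]}\bigl(F_{\mathcal K}:\ j\in\mathcal K,\ |\mathcal K|=m\bigr)=\lambda-\lambda_j(z,V).\]
To justify this, note that the monic gcd over $\mathrm{Frac}(R)[\lambda]$ lies in $R[\lambda]$ by Gauss's lemma, since everything is monic. Its roots near $z^0$ are exactly the common local roots, so it equals $\lambda-\lambda_j$. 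Hence $\lambda_j(z,V)$ coincides on $\Omega$ with a Laurent polynomial, and $D(z,V,\lambda)=\prod_j(\lambda-\lambda_j(z,V))$ splits over $R$ for every $V\in U$. One technical point needs care: identifying "local roots" of a polynomial in $R[\lambda]$ with the branches, and showing that a gcd of polynomials in $R[\lambda]$ has as its local roots exactly the common ones. Both follow from the fact that a polynomial identity holding on the open set $\Omega$ holds identically.

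\emph{Conclusion.} By Lemma~\ref{closed}, the splitting locus $\widetilde{\mathcal R}$ is Zariski closed. It contains the nonempty Euclidean-open set $U$, so it is all of $\mathbb C^N$. The main obstacle is the transport step: making sure that divisibility in $R[\lambda]$, not merely analytic factorization, survives the monodromy. This requires shrinking the balls $B_q$ to keep $B_1\subset U$, and re-running the interpolation argument to upgrade from $B_r\cap B_1$ to all of $U$.
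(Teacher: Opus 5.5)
Your proposal is correct and follows essentially the same route as the paper: you take the factor from Lemma \ref{holomorphic}, transport it by monodromy (Lemma \ref{contin} together with Theorem \ref{symm}) to factors indexed by arbitrary $m$-subsets, take a gcd to isolate each $\lambda-\lambda_j$, and use Lemma \ref{closed} to pass to all $V$. The differences are cosmetic: you transport over all of $U$ and conclude on a Euclidean open set, whereas the paper works at the single point $V^0$ and lets $V^0$ range over $\mathcal U$. The ``technical point'' you flag is not fully justified by your phrase about polynomial identities on $\Omega$; the paper settles it with B\'ezout's identity in $\mathbb{C}(z_1,\dots,z_d)[\lambda]$, evaluated on an open subset of $\Omega$ avoiding the poles of the coefficients, which shows that the common root $\lambda_j$ is a root of the gcd.
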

\begin{proof}
We fix $V$ at a point $V^0 \in \mathcal{U}$. From Lemma $\ref{holomorphic}$, there exists a monic factor $F(z,\lambda)$ of $D(z,V^0,\lambda)$ and $\mathcal{J} \subsetneq \{1,...,N\}$ such that 
\[ F(z,\lambda) = \displaystyle\prod_{j\in \mathcal{J}}^{}(\lambda-\lambda_j(z,V^0))\]
for every $z \in \Omega$. For any $\sigma \in S_N$, we choose a loop $\gamma$ in $\mathcal{U}$ with base point $V^0$ such that $\varphi(\gamma) = \sigma$. By Lemma $\ref{contin}$, there is a monic factor $F_\sigma(z,\lambda)$ of $D(z,V^0,\lambda)$ such that
\[ F_\sigma(z,\lambda) = \displaystyle\prod_{j \in \sigma(\mathcal{J})}^{}(\lambda-\lambda_j(z,V^0)) ,\quad \forall\, z\in \Omega.\]

 For any fixed $j_0 \in \{1,...,N\}$, we could choose several elements $\sigma_\beta \in S_N$, $\beta \in \mathcal{B}$, such that 
\begin{align}\label{union}
    \bigcap_{\beta \in \mathcal{B}}\{\sigma_\beta(j)\,|\,j \in \mathcal{J} \} = \{j_0\}.
\end{align} 
From Theorem $\ref{symm}$, we can choose a loop $\gamma_\beta$ with base point $V^0$ such that $\varphi(\gamma_\beta) = \sigma_\beta$. Therefore, for any $\beta \in \mathcal{B}$, there exists a corresponding factor $F_\beta(z,\lambda)$ of $D(z,V^0,\lambda)$ such that
\[ F_{\beta}(z,\lambda) = \displaystyle\prod_{j \in \sigma_{\beta}(\mathcal{J})}^{}(\lambda-\lambda_j(z,V^0)),\quad \forall \, z \in \Omega.\]
Set
\[ Q(z,\lambda) = \gcd_{\beta \in \mathcal{B}}(F_{\beta}),\]
and we may assume that $Q$ is monic (since $F_{\beta}$ is monic). By the B\'{e}zout theorem, there exist $A_\beta \in K[\lambda]$ ($K = \mathbb{C}(z_1,...,z_d)$) such that
\[ \displaystyle\sum_{\beta \in \mathcal{B}}^{}A_\beta(z,\lambda)F_{\beta}(z,\lambda) = Q(z,\lambda).\]
We choose an open set $\Omega_0 \subset \Omega$ such that $A_\beta$ has no poles in $\Omega_0$ for every $\beta \in \mathcal{B}$.  By $\eqref{union}$,
\[Q(z,\lambda_{j_0}(z)) = 0, \quad \forall \, z\in \Omega_0.\] 
The identity theorem implies that this actually holds for every $z \in \Omega$. Since $Q | F_{\beta}$ for every $\beta \in \mathcal{B}$, we have 
\[ \{\lambda \in \mathbb{C}\,|\,Q(z,\lambda) = 0\} \subset \{\lambda_{j}(z)\,| \, j \in \sigma_\beta(\mathcal{J}) \}, \quad \forall \, z \in \Omega,\, \beta \in \mathcal{B}.\]
Therefore, for any fixed $z \in \Omega$, $\lambda_{j_0}(z)$ is the unique root of $Q(z,\lambda)$, and this root is simple, since $D(z,V^0,\lambda)$ has only simple roots. It follows that $\deg(Q) = 1$. Hence, there exists $h_{j_0}(z) \in R$ such that 
\[ Q(z,\lambda) = \lambda-h_{j_0}(z),\]
and $h_{j_0}(z) = \lambda_{j_0}(z,V^0)$ on $\Omega$. Since $j_0$ was arbitrary, we can find $h_j(z) \in R$, $j = 1,..,N$ such that 
\[ \lambda-h_j(z) | D(z,V^0,\lambda), \]
and $h_j(z) = \lambda_j(z,V^0)$ on $\Omega$, thus all $h_j$ are distinct and 
\[ D(z,V^0,\lambda) = \displaystyle\prod_{j = 1}^{N}(\lambda-h_j(z)).\]
Since $V^0$ can be chosen arbitrarily in $\mathcal{U}$, the splitting property holds on a Zariski open set. By Lemma $\ref{closed}$, $D(z,V,\lambda)$ splits over $R$ for any $V \in \mathbb{C}^N$.
\end{proof}

\section{Splitting case}\label{splitcase}
In this section, we assume that $D(z,V,\lambda)$ splits over $R$ for every $V \in \mathbb{C}^N$. We expect that the convergence of eigenvalues in Lemma $\ref{converge}$ can be extended to the convergence of Laurent polynomials that appear in the linear factor of $D(z,V,\lambda)$. From Lemma $\ref{supp}$, the supports of these Laurent polynomials are contained in a common finite set, thus we only need to prove the convergence of certain coefficients, which can be recovered by Cauchy integral formula. The convergence obtained in Lemma $\ref{converge}$ can be applied pointwise under the integral.

\begin{thm}\label{inherit}
   Suppose that 
   \[A(z) = \begin{pmatrix}
 A_{n-1}(z)&\alpha(z) \\ 
\beta^{T}(z) & a_{nn}(z)
\end{pmatrix} \in M_n(R),\]
and set $B(z,V) = \operatorname{diag}(v_1,...,v_n) + A(z)$. Assume that for every $V \in \mathbb{C}^n$, $P(z,V,\lambda) = \det(\lambda I - B(z,V))$ splits over $R$. Then for every $V '  = (v_1,...,v_{n-1})\in \mathbb{C}^{n-1}$, $P_{n-1}(z,V',\lambda) = \det(\lambda I -\mathrm{diag}(v_1,...,v_{n-1}) - A_{n-1}(z))$ splits over $R$.
\end{thm}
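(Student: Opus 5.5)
The plan is to let the last potential $v_n=u$ tend to infinity and pass to the limit in the linear factors of $P(z,(V',u),\lambda)$. By Lemma \ref{converge}, $n-1$ eigenvalues of $B(z,(V',u))$ converge to the eigenvalues of $\mathrm{diag}(V')+A_{n-1}(z)$, while the remaining one escapes like $u+O(1)$. Lemma \ref{supp} confines every linear factor to a fixed finite support $\mathcal{S}$, so only finitely many coefficients must converge. Those coefficients can be recovered linearly from values at finitely many points, exactly as in \eqref{value}.

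\emph{Reduction to generic $V'$.} Applying Lemma \ref{closed} to the matrix $A_{n-1}$ shows that the set of $V'\in\mathbb{C}^{n-1}$ for which $P_{n-1}$ splits is Zariski closed. It therefore suffices to prove splitting for $V'$ in a non-empty Zariski open set. Take $V'$ such that $\operatorname{Disc}_\lambda P_{n-1}(z,V',\lambda)\not\equiv 0$ as a Laurent polynomial in $z$. Then choose a connected open set $\Omega\subset(\mathbb{C}^*)^d$ with compact closure on which $P_{n-1}(z,V',\lambda)$ has simple roots $\mu_1(z),\dots,\mu_{n-1}(z)$. These roots are holomorphic on $\Omega$, and after shrinking $\Omega$ there is $\varepsilon>0$ with $|\mu_i(z)-\mu_j(z)|\ge\varepsilon$ for $i\neq j$.

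\emph{Uniform convergence of eigenvalues.} I would redo the implicit function argument of Lemma \ref{converge} with $z$ as an extra holomorphic parameter. Put $w=u^{-1}$ and
\[
p(z,\lambda,w)=w\,P(z,(V',w^{-1}),\lambda).
\]
Its $\lambda$-derivative at $(z,\mu_j(z),0)$ is nonzero. Hence, after shrinking $\Omega$, there are holomorphic $\rho_j(z,w)$ on $\Omega\times\{|w|<\delta\}$ with $\rho_j(z,0)=\mu_j(z)$. Setting $\lambda_j(z,u)=\rho_j(z,u^{-1})$, we get $\lambda_j(z,u)\to\mu_j(z)$ uniformly on $\Omega$ as $u\to\infty$. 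For $|u|$ large these $n-1$ values are pairwise distinct on $\Omega$, and the $n$-th eigenvalue is of size $\gtrsim|u|$, so all $n$ roots of $P(z,(V',u),\lambda)$ are distinct on $\Omega$.

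\emph{Passing to the limit.} By hypothesis $P(z,(V',u),\lambda)=\prod_k(\lambda-h_k^u(z))$ with $h_k^u\in R$, and by Lemma \ref{supp} each $\operatorname{supp}(h^u_k)\subset\mathcal{S}$. Since the roots are distinct on the connected set $\Omega$, each $h^u_k$ agrees with exactly one root function on all of $\Omega$. Thus for each $j\le n-1$ there is $h_j^u\in R$ supported in $\mathcal{S}$ with $h_j^u=\lambda_j(\cdot,u)$ on $\Omega$. Choose $z^1,\dots,z^s\in\Omega$ with the matrix $M=((z^i)^\alpha)$ invertible. The coefficients of $h_j^u$ are then $M^{-1}$ applied to $(\lambda_j(z^i,u))_i$, so they converge as $u\to\infty$ to the coefficients of some Laurent polynomial $g_j$ supported in $\mathcal{S}$. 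Because the support is finite, $h_j^u\to g_j$ pointwise on $\Omega$. Combined with $h_j^u=\lambda_j(\cdot,u)\to\mu_j$, this gives $g_j=\mu_j$ on $\Omega$. Therefore $P_{n-1}(z,V',g_j(z))=0$ on an open set, hence identically. The $g_j$ are pairwise distinct (they differ on $\Omega$), so $\prod_{j}(\lambda-g_j)$ divides $P_{n-1}$ in $K[\lambda]$. Comparing degrees and leading coefficients gives $P_{n-1}=\prod_{j=1}^{n-1}(\lambda-g_j)$, so $P_{n-1}$ splits over $R$.

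The main obstacle is the matching step: identifying each $h^u_k$ with a single branch $\lambda_j(\cdot,u)$ on all of $\Omega$, uniformly for large $u$, together with the uniform-in-$z$ convergence of those branches. I expect the parametrized implicit function theorem plus connectedness of $\Omega$ to handle it.
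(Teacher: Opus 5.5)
Your proof is correct and follows the paper's strategy. You send $v_n\to\infty$, use the uniform finite support from Lemma \ref{supp} so that only finitely many coefficients need to converge, identify the limits as the roots of $P_{n-1}$, and finish with Lemma \ref{closed}. The differences are technical: you work at a generic $V'$ on a small connected $\Omega$, match each factor to a branch by connectedness, and recover coefficients by point evaluation as in \eqref{value}, whereas the paper takes $V'=(M,\dots,M^{n-1})$, separates the roots on all of $\mathbb{T}^d$ by Gershgorin disks, extracts coefficients by Cauchy integrals with bounded convergence, and concludes by dividing the determinant expansion by $v_n$. One small omission: you should say why $\{V' : \operatorname{Disc}_\lambda P_{n-1}(z,V',\lambda)\not\equiv 0\}$ is nonempty, which follows at once from, e.g., the paper's choice $V'=(M,\dots,M^{n-1})$ with $|M|$ large.
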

\begin{proof}
We fix $v_i=M^i, M \in \mathbb{C}$, for $i=1,\ldots,n-1$, and let $v_n$ vary over $\{z \in \mathbb{C}\, | \, |z| \geq |M|^{n}\}$. Rewrite $B(z,V)$ as $B(z,v_n)$, $P(z,V,\lambda)$ as $P(z,v_n,\lambda)$ and $P_{n-1}(z,V',\lambda)$ as $P_{n-1}(z,\lambda)$. Assume that 
\begin{align}\label{split}
     P(z,v_n,\lambda) = \displaystyle\prod_{j = 1}^{n}(\lambda-\lambda_j(z,v_n)), \quad \lambda_j(z,v_n) \in R.
\end{align}
When $|M|$ is sufficiently large and $z$ is restricted to a compact set, by Gershgorin circle theorem, we can find disks $B_i$ centered at $v_i$, $i = 1,...,n-1$, and a disk $B_n$ centered at $\infty$, such that they are pairwise disjoint, and $B(z,v_n)$ has exactly one eigenvalue in each of the disks. Relabeling if necessary, we may assume for any $z \in \mathbb{T}^d$, 
\[\lambda_j(z,v_n)  \in B_j,\quad \forall \, j \in \{1,...,n\}.\]
By Lemma $\ref{supp}$, there exists a finite set $\mathcal{S} \subset \mathbb{Z}^d$ such that 
\[ \operatorname{supp}(\lambda_j(z,v_n)) \subset \mathcal{S} \]
for every $j = 1,...,n$. Thus, we can write $\lambda_j(z,v_n)$ as 
\[ \lambda_j(z,v_n) = \displaystyle\sum_{\alpha \in \mathcal{S}}c_\alpha^j(v_n)z^\alpha.\]
By the Cauchy integral formula, 
\[c_{\alpha}^j(v_n) =\frac1{(2\pi)^d}\int_{[0,2\pi]^d}\lambda_j(e^{i\theta},v_n)e^{-i\alpha\cdot\theta}\,d\theta.\]
From Lemma $\ref{converge}$, $\lambda_j(z,v_n)$ converges as $v_n \rightarrow \infty$ for any $z \in \mathbb{T}^d$ and $j = 1,...,n-1$. Therefore, by the bounded convergence theorem, $c_\alpha^j(v_n)$ converges as $v_n \rightarrow \infty$. We denote the limit by $c_\alpha^j$, and set
\[ \lambda_j(z) = \displaystyle\sum_{\alpha \in \mathcal{S}}^{}c_\alpha^jz^\alpha.\]
Then for any $z \in (\mathbb{C}^*)^d$,
\begin{align}\label{converge3}
  \lambda_j(z,v_n) \rightarrow \lambda_j(z), \quad \quad \mbox{as}\, \, v_n \rightarrow \infty,  
\end{align}
and this convergence is uniform on any compact set. Also by Lemma $\ref{converge}$, we have 
\begin{align}\label{unb}
 \lambda_n(z,v_n) = v_n + O(1)   
\end{align}
for $z \in \mathbb{T}^d$. Note that 
\[ P(z,v_n,\lambda) = (\lambda-v_n - a_{nn}(z))P_{n-1}(z,\lambda) + R(z,\lambda).\] 
Dividing both sides by $v_n$ and using \eqref{split}, we obtain that
\[(\frac{\lambda}{v_n} - \frac{\lambda_n(z,v_n)}{v_n})\displaystyle\prod_{j = 1}^{n-1}(\lambda- \lambda_{j}(z,v_n)) = (\frac{\lambda}{v_n}-1-\frac{a_{nn}(z)}{v_n})P_{n-1}(z,\lambda) + \frac{R(z,\lambda)}{v_n}.\]
We let $v_n \rightarrow \infty$ and apply $\eqref{converge3}$, $\eqref{unb}$, we obtain
\[ \displaystyle\prod_{j = 1}^{n-1}(\lambda- \lambda_{j}(z)) = P_{n-1}(z,\lambda).\]
Hence $P_{n-1}(z,V',\lambda)$ splits over $R$ for $V' = (M,...,M^{n-1})$. Note that the above argument remains valid for $(v_1,...,v_{n-1})$ near $(M,...,M^{n-1})$. Hence there exists an open set (in the Euclidean topology) in $\mathbb{C}^{n-1}$ such that $P_{n-1}(z,V',\lambda)$ splits over $R$ for every $V'$ in this set. From Lemma $\ref{closed}$, we deduce that $P_{n-1}(z,V',\lambda)$ splits over $R$ for every $V' \in \mathbb{C}^{n-1}$.
\end{proof}
The same argument applies for any $v_i$ in place of $v_n$, and it follows that the characteristic polynomial of the corresponding principal submatrix splits. Applying Theorem $\ref{inherit}$ to $L(z,V)$ repeatedly, we deduce that for any distinct $i,j \in \{1,2,..,N\}$ and the corresponding $2\times2$ principal submatrix
\[ L_{i,j}(z,v_i,v_j)\ = \begin{pmatrix}
 v_i + l_{ii}(z)& l_{ij}(z)\\ 
 l_{ji}(z)&v_j +  l_{jj}(z)
\end{pmatrix},\]
the characteristic polynomial 
\[D_{i,j}(z,v_i,v_j,\lambda) = \det(\lambda I -L_{i,j}(z,v_i,v_j)) \]
splits over $R$ for every $(v_i,v_j) \in \mathbb{C}^2$. By Theorem \ref{simple}, we deduce that for any distinct $i,j \in \{1,...,N\}$ with $C_{ij} \neq \varnothing $, $l_{ij}(z)$ is a monomial, and $l_{ii}(z) = l_{jj}(z)$. Since the quotient graph is connected, for any different $i,j \in \{1,...,N\}$, there exists a path in the quotient graph
\[ u_i =u_{i_1} \rightarrow\cdots\rightarrow u_{i_m} = u_j,\]
such that $C_{i_ki_{k+1}} \neq \varnothing$, $k = 1,...,m-1$. Hence, we have 
\[ l_{i_ki_k}(z) = l_{i_{k+1}i_{k+1}}(z) ,\quad k = 1,...,m-1,\]
which gives $l_{ii}(z) = l_{jj}(z)$. Therefore, we obtain the following theorem:
\begin{thm}\label{fs}
 If $D(z,V,\lambda)$ splits over $R$ for every $V \in \mathbb{C}^N$, then for any distinct $i,j \in \{1,...,N\}$, $l_{ii}(z) = l_{jj}(z)$, and $l_{ij}(z)$ is either identically zero or a monomial.
\end{thm}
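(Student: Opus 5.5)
The proof assembles results already established, in three steps.

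\emph{Step 1: reduce to $2\times 2$ blocks.} Fix distinct $i,j\in\{1,\dots,N\}$. Apply Theorem~\ref{inherit} repeatedly, after simultaneously permuting rows and columns so that the vertex to be deleted comes last. At each step one vertex other than $u_i,u_j$ is removed, and the splitting property for every potential is preserved. Such a permutation is a conjugation by a permutation matrix. It does not change the characteristic polynomial, and it maps diagonal potentials to diagonal potentials, so the hypothesis of Theorem~\ref{inherit} holds at each stage. After $N-2$ deletions we find that
\[
D_{i,j}(z,v_i,v_j,\lambda)=\det\bigl(\lambda I-L_{i,j}(z,v_i,v_j)\bigr)
\]
splits over $R$, and in particular is reducible, for every $(v_i,v_j)\in\mathbb C^2$.

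\emph{Step 2: apply the $N=2$ result to each block.} Suppose first that $l_{ij}\not\equiv 0$, so $u_i$ and $u_j$ are adjacent. The $2\times2$ matrix $L_{i,j}$ has exactly the structure considered in Theorem~\ref{simple}, and its connectedness hypothesis is just $l_{ij}\not\equiv0$. Theorem~\ref{simple} then gives that $l_{ij}(z)=cz^{\alpha}$ is a monomial and that $l_{ii}=l_{jj}$.

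Here I would check one point carefully. The final step of the proof of Theorem~\ref{simple} deduces $l_{11}=l_{22}$ from the fact that $l_{11}-l_{22}$ is constant. This uses the absence of self-loops: $l_{ii}$ has no constant term. That property survives here, because the diagonal entries of the submatrix are the original $l_{ii}$.

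If instead $l_{ij}\equiv0$, there is nothing to prove for the off-diagonal entry.

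\emph{Step 3: propagate equality of the diagonal entries.} To obtain $l_{ii}=l_{jj}$ for nonadjacent $i,j$, use connectivity of the quotient graph. Choose a path $u_i=u_{i_1}\to\cdots\to u_{i_m}=u_j$ along which each consecutive $C_{i_ki_{k+1}}$ is nonempty, i.e.\ $l_{i_ki_{k+1}}\not\equiv0$. By Step 2, $l_{i_ki_k}=l_{i_{k+1}i_{k+1}}$ for each $k$, and chaining these equalities gives $l_{ii}=l_{jj}$.

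The main point needing care is Step 1. Theorem~\ref{inherit} is stated for deleting the last index, so I would justify the reordering explicitly as above. I would also note that its conclusion holds for all $V'\in\mathbb C^{n-1}$, via Lemma~\ref{closed}, which is exactly the hypothesis needed to apply it again at the next step.
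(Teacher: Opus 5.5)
Your proposal is correct and follows the paper's argument essentially verbatim. You apply Theorem~\ref{inherit} repeatedly to reach every $2\times2$ principal submatrix, including the reordering you justify, which the paper compresses into ``the same argument applies for any $v_i$ in place of $v_n$''; you then apply Theorem~\ref{simple} to each adjacent pair and use connectivity of the quotient graph to chain the equalities $l_{ii}=l_{jj}$, exactly as the paper does, and your extra checks (that the $2\times 2$ block is connected exactly when $l_{ij}\not\equiv 0$, and that the no-self-loop property survives) are precisely the points the paper's proof uses implicitly.
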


Combining Theorem $\ref{main}$ and Theorem $\ref{fs}$, we deduce that if $D(z,V,\lambda)$ is reducible for every potential, then conditions $\mathrm{I}$ and $\mathrm{II}$ hold. To prove Theorem $\ref{main1}$, it remains to show that $\mathrm{quasi}(\mathcal{P}) = 0$ for every cycle $\mathcal{P}$ in the quotient graph.
We first recall the following theorem, the proof can be found in \cite{faust2026generic}.
\begin{thm}[\cite{faust2026generic} Theorem 4.1]\label{irreducible2}
Assume that $\Gamma$ is a $\mathbb{Z}$-periodic graph, and it has a fundamental domain which is minimally connected and there exists exactly one edge between $W$ and $\Gamma\setminus W$.

After fixing non-zero edge weights, $D(x,V,\lambda)$ is irreducible for generic choice of potential.
\end{thm}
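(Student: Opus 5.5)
The plan is to argue by contradiction through the dichotomy of Theorem 2.1 in \cite{faust2026generic}. It suffices to show that $D(x,V,\lambda)$ is not reducible for every $V$. Suppose it were. Then Theorem \ref{main} shows that $D$ splits over $R=\mathbb C[x^{\pm}]$ for every $V\in\mathbb C^N$, and Theorem \ref{fs} then gives conditions $\mathrm{I}$ and $\mathrm{II}$. I will not use condition $\mathrm{III}$, since that is presumably what this theorem is meant to establish.

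Let $(u_i,u_j+a)$, with $a\neq 0$, be the unique (up to translation) edge leaving $W$. I would split into three cases.

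\emph{Case $i=j$.} Then $C_{ii}\ni \pm a$, while for every $k\neq i$ we have $C_{kk}=\varnothing$, because no other vertex has an edge to a translate. Since $N\ge 2$, this contradicts condition $\mathrm{I}$.

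\emph{Case $i\neq j$ and $u_i,u_j$ adjacent inside $W$.} Then $C_{ij}\supset\{0,a\}$, which contradicts condition $\mathrm{II}$.

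\emph{Case $i\neq j$ and $u_i,u_j$ not adjacent.} Since $W$ is a tree, there is a unique path $u_i=u_{i_1}\to\cdots\to u_{i_m}=u_j$ inside $W$, with $m\ge 3$ and all shifts equal to $0$. Together with the edge carrying shift $a$, it forms a cycle in the quotient graph.

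The remaining case is handled by Theorem \ref{inherit}. Applying it repeatedly to delete every vertex not on this cycle shows that the characteristic polynomial of the $m\times m$ principal submatrix on the cycle splits over $R$ for every $V'\in\mathbb C^m$. By condition $\mathrm{I}$ all $C_{kk}$ are empty, since $C_{jj}$ is empty. So this submatrix is a constant matrix, namely $\mathrm{diag}(V')$ plus the tree weights, plus $c x^{a}E_{ij}+c x^{-a}E_{ji}$. Expanding the determinant, the variable $x$ enters only through the $m$-cycle permutation terms. Hence
\[\det(\lambda I-L_m(x,V'))=P_{V'}(\lambda)+\kappa\,(x^{a}+x^{-a}),\]
where $P_{V'}$ is monic of degree $m$ with coefficients polynomial in $V'$, and $\kappa=\pm c\prod(\text{tree weights})\neq 0$.

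Splitting means that every root of $P_{V'}(\lambda)=s$, with $s=-\kappa(x^{a}+x^{-a})$, is a single-valued Laurent polynomial in $x$. The map $x\mapsto s$ from $\mathbb C^*$ to $\mathbb C$ is ramified only over $s=\pm 2\kappa$ and over $\infty$. Take a finite critical value $s_0$ of $P_{V'}$ with $s_0\neq \pm2\kappa$ at which $P_{V'}-s_0$ has a double root. Choose $x_0$ with $s(x_0)=s_0$; then $s$ is a local biholomorphism near $x_0$. A small loop around $x_0$ maps to a small loop around $s_0$, which swaps two roots. This contradicts single-valuedness.

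It remains to produce such a $V'$. Since $\lambda^m+\kappa(x^a+x^{-a})$-type degenerations aside, the critical values of $P_{V'}$ genuinely depend on $V'$: the derivative $P_{V'}'$ has $m-1\ge 1$ critical points, and their values are non-constant functions of $V'$. For instance, letting one $v_k\to\infty$ pushes a critical value to infinity, as in Lemma \ref{converge}. So for generic $V'$ some critical value avoids $\{\pm2\kappa\}$ and is attained at a simple critical point, which gives the desired contradiction.

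I expect the main obstacle to be this last genericity step. One must check that for some $V'$ the polynomial $P_{V'}$ has a critical value outside $\{\pm 2\kappa\}$ with a genuinely simple branching, so that the local monodromy is a nontrivial transposition. My first attempt would be the large-$v_k$ asymptotics: with all $v_k$ distinct and large, $P_{V'}\approx\prod(\lambda-v_k)$, whose critical values are huge and distinct.
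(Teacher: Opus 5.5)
Your proof is correct, and it necessarily takes a different route from the paper, which gives no argument for this statement and simply imports it from \cite{faust2026generic}. You derive it from the paper's own machinery:
\begin{itemize}
\item the dichotomy together with Theorem~\ref{main} upgrades reducibility to splitting;
\item Theorem~\ref{fs} rules out the cases where the exiting edge joins a vertex to its own translate, or joins two vertices already adjacent in $W$;
\item Theorem~\ref{inherit} cuts down to the cycle formed by the exiting edge and the tree path. There the $x$-dependence collapses to $\kappa(x^a+x^{-a})$ with $\kappa\neq0$ constant.
\end{itemize}
None of these results depends on Theorem~\ref{irreducible2}, so there is no circularity.

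Two small points on the last step.
\begin{itemize}
\item Simple branching is not needed. Suppose $h\in\mathbb C[x^{\pm}]$ satisfies $P_{V'}(h(x))=s(x)$ and $h(x_0)$ is a critical point of $P_{V'}$. Differentiating gives $s'(x_0)=0$, which is impossible unless $s(x_0)\in\{\pm2\kappa\}$.
\item A critical value outside $\{\pm2\kappa\}$ exists exactly as you suggest. Take $V'=t\mu$ with distinct $\mu_k$. Then $t^{-m}P_{t\mu}(t\nu)\to\prod_k(\nu-\mu_k)$ locally uniformly. This limit has nonzero critical values because its roots are simple, so by Hurwitz's theorem $P_{t\mu}$ has a critical value of order $t^m$.
\end{itemize}
The case $N=1$, which your first case excludes, is trivial because $D$ is then linear in $\lambda$.

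As for what each route buys: a direct proof of the cited theorem has to handle the full tree-plus-one-edge dispersion polynomial. There the coefficient of $x^{\pm a}$ is $\kappa$ times the characteristic polynomial of the forest left after deleting the path, not a constant. A direct proof must also exclude every factorization, not only complete splittings. In exchange, it needs none of the monodromy analysis of Sections~\ref{mono} and~\ref{redutosplit}.

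Your route leans on that heavier machinery, but it stays internal to the paper. It also works verbatim for any shift $a\neq0$, which covers the case $|a_1|\ge2$ arising in the proof of Theorem~\ref{main1}. In fact it makes the citation dispensable there. Your third-case computation, applied directly to the chordless cycle $\mathcal P_0$ after the specialization $z=(x,1,\ldots,1)$, already yields the contradiction that establishes condition~$\mathrm{III}$.
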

\begin{proof}[\bf Proof of Theorem $\ref{main1}$]
Suppose that there exists a cycle $\mathcal{P}$ with 
   \[\operatorname{quasi}(\mathcal{P})\neq 0.\]
Among all such cycles, choose one of minimal length and denote it by $\mathcal {P}_0$. Then $\mathcal{P}_0$ is a simple and chordless cycle, that is, if we set
\[ \mathcal{P}_0 : u_{i_1}  \rightarrow\cdots\rightarrow u_{i_m} \rightarrow u_{i_1},\]
then all the vertices in this cycle are distinct. Moreover, if 
\[j \neq k\pm 1\,(\mathrm{mod \,m}),\]
then $u_{i_j}$ and $u_{i_k}$ are not adjacent in the quotient graph. Indeed, if $\mathcal \mathcal{P}_0$ is not simple or contains a chord, then $\mathcal \mathcal{P}_0$ can be decomposed into two cycles $\mathcal P_1$ and $\mathcal P_2$, each of strictly smaller length, such that 
\[\mathrm{quasi}(\mathcal{P}_1) + \mathrm{quasi}(\mathcal{P}_2)  = \mathrm{quasi}(\mathcal{P}_0).\]
Hence, one of $\mathrm{quasi}(\mathcal{P}_1)$ and $\mathrm{quasi}(\mathcal{P}_2)$ is non-zero. This contradicts the fact that $\mathcal{P}_0$ has a minimal length.

 Now we consider the $\mathbb{Z}^d$-periodic subgraph of $\Gamma$ induced by the vertices in the equivalence classes of $u_{i_1},...,u_{i_m}$, and by a slight abuse of notation, we still denote the corresponding Floquet matrix by $L(z,V)$, where $V = (v_1,...,v_m) \in \mathbb{C}^m$. Next, we delete all edges in this subgraph whose endpoints belong to the same equivalence class. We denote the resulting $\mathbb{Z}^d$-periodic graph by $\Gamma'$ and its Floquet matrix by $L'(z,V)$. From Theorem $\ref{inherit}$, $D(z,V,\lambda)$ splits over $R$ for every $V \in \mathbb{C}^m$. Since condition $\mathrm{I}$ holds, there exists $\phi(z) \in R$ such that 
 \[ L'(z,V) = L(z,V) - \phi(z) I_m.\]
 Therefore, $D'(z,V,\lambda) = \det(\lambda I -L'(z,V))$ also splits over $R$ for any $V \in \mathbb{C}^m$.
 
We now choose a fundamental domain adapted to the cycle $\mathcal{P}_0$. Set
$a=\operatorname{quasi}(\mathcal{P}_0)\neq 0 $, and define
\[
\tau_1=0,\quad
\tau_j=\sum_{k=1}^{j-1}\alpha_{i_k i_{k+1}},
\quad 2\le j\le m.
\]
Write $w_j=u_{i_j}+\tau_j$ and choose $W'=\{w_1,\ldots,w_m\}$ as the new fundamental domain. From the definition of $\tau_j$ and the fact that $\mathcal{P}_0$ is chordless, the edges of $\Gamma'$ are precisely the $\mathbb Z^d$-translates of $(w_j,w_{j+1})$, $1\le j<m$, and $(w_m,w_1+a)$.
Denote the weights of these edges by
\[
c_j=E((w_j,w_{j+1}))\quad (1\le j<m),
\qquad
c_m=E((w_m,w_1+a)).
\]
Changing the fundamental domain only conjugates the Floquet matrix, thus the dispersion polynomial is unchanged. We still use $L'(z,V)$ to denote the Floquet matrix corresponding to \(W'\).

Without loss of generality, we may assume that $a_1\ne0$. Define a weighted $\mathbb Z$-periodic graph $\widetilde\Gamma$ with vertex set
\[
\mathcal V(\widetilde\Gamma)=\{1,\ldots,m\}\times\mathbb Z,
\]
and the edge set $\mathcal{E}(\widetilde{\Gamma})$ consists of
\[
\begin{aligned}
&\bigl((j,n),(j+1,n\bigr),
&&1\le j<m,\quad n\in\mathbb Z,\\
&\bigl((m,n),(1,n+a_1)\bigr),
&&n\in\mathbb Z,
\end{aligned}
\]
with weights $c_j$ and $c_m$, respectively. The $\mathbb Z$-action is given by
\[
r\cdot(j,n)=(j,n+r).
\]
In other words, $\widetilde\Gamma$ is the quotient of $\Gamma'$ by $\{0\}\times\mathbb Z^{d-1}$

The set
\[
\widetilde W=\{(1,0),\ldots,(m,0)\}
\]
is a fundamental domain of $\widetilde{\Gamma}$. Since $a_1\ne0$, the edges in $\widetilde{W}$ are exactly
\[ \bigl((i,0),(i+1,0)\bigr), \quad i = 1,...,m-1.\]
Moreover, the edges 
\[\bigl((m,n),(1,n+a_1)\bigr), \quad n \in \mathbb{Z} \]
form the unique translation orbit among the edges that connect different translates of $\widetilde W$. Thus $\widetilde\Gamma$ satisfies the hypotheses of Theorem 7.3.

For $V=(v_1,\ldots,v_m)$, assign the potential $v_j$ to every vertex $(j,n)$. Let $\widetilde L(x,V)$ be the corresponding Floquet matrix. We observe that
\[
\widetilde L(x,V)=L'((x,1,\ldots,1),V).
\]
Consequently,
\[
\begin{aligned}
\widetilde D(x,V,\lambda)
&:=\det\bigl(\lambda I_m-\widetilde L(x,V)\bigr)\\
&=D'((x,1,\ldots,1),V,\lambda).
\end{aligned}
\]
Since $D'(z,V,\lambda)$ splits over $R$ for every $V\in\mathbb C^m$, specializing $z_1=x$ and $z_2=\cdots=z_d=1$ in its monic linear factors shows that $\widetilde D(x,V,\lambda)$ splits over $\mathbb C[x^{\pm1}]$ for every $V\in\mathbb C^m$. On the other hand, Theorem 7.3 implies that $\widetilde D(x,V,\lambda)$ is irreducible for generic $V\in\mathbb C^m$, which is a contradiction. Therefore, $\operatorname{quasi}(\mathcal{P})=0$ for every cycle $\mathcal{P}$ in the quotient graph.
\end{proof}
\begin{proof}[\bf Proof of Corollary $\ref{Bloch}$]
 Assume that conditions $\mathrm{I}$, $\mathrm{II}$ and $\mathrm{III}$ hold, then 
\[ D(z,V,\lambda) = \displaystyle\prod_{j = 1}^{N}(\lambda-\phi(z) -\lambda_j(V)),\]
where $\phi(z) \in R$ and $\lambda_1(V),...,\lambda_N(V)$ are the eigenvalues of 
\[\mathrm{diag}(v_1,...,v_N) + H.\]
Therefore, the Bloch variety $\mathcal{B}_V$ is irreducible if and only if 
\[ \lambda_1(V) = \lambda_2(V) = \cdots=\lambda_N(V).\]
However, for generic $V \in \mathbb{C}^N$, $\mathrm{diag}(v_1,...,v_N) + H$ has only simple eigenvalues, thus $\mathcal{B}_V$ is reducible for generic $V \in \mathbb{C}^N$. If any of these conditions fails, $D(z,V,\lambda)$ is irreducible for generic $V \in \mathbb{C}^N$, thus $\mathcal{B}_V$ is irreducible for generic $V \in \mathbb{C}^N$.   
\end{proof}
\section*{AI Acknowledgment }ChatGPT 5.6-Sol provided purely algebraic proof for Theorems $\ref{symm}$ and $\ref{main}$, the author reformulates the proofs using methods from analysis, which makes the arguments more natural.
\bibliographystyle{alpha}
\bibliography{reference}
\end{document}